\newtheorem{theorem}{Theorem}[section]
\newtheorem{lemma}[theorem]{Lemma}
\theoremstyle{definition}
\newtheorem{definition}[theorem]{Definition}
\theoremstyle{remark}
\newtheorem{remark}[theorem]{Remark}
\theoremstyle{example}
\DeclareFontFamily{OML}{rsfs}{\skewchar\font'177}
\DeclareFontShape{OML}{rsfs}{m}{n}{ <5> <6> rsfs5 <7> <8> <9>
rsfs7 <10> <10.95> <12> <14.4> <17.28> <20.74> <24.88> rsfs10 }{}
\DeclareMathAlphabet{\mathfs}{OML}{rsfs}{m}{n}
\newcommand{\BQ}{{\mathbb{Q}}}
\newcommand{\BR}{{\mathbb{R}}}
\newcommand{\BZ}{{\mathbb{Z}}}
\newcommand{\CC}{{\mathcal{C}}}
\newcommand{\CE}{{\mathcal{E}}}
\newcommand{\CI}{{\mathcal{I}}}
\newcommand{\CL}{{\mathcal{L}}}
\newcommand{\CM}{{\mathcal{M}}}
\newcommand{\CR}{{\mathcal{R}}}
\newcommand{\ind}{{\mathbbm{1}}}
\newcommand{\prob}{P}
\newcommand{\bae}{\begin{equation}\begin{aligned}}
\newcommand{\eae}{\end{aligned}\end{equation}}
\def\beq{ \begin{equation} }
\def\eeq{ \end{equation} }
\def\ep{\epsilon}
\def\square{\vcenter{\vbox{\hrule height .4pt
  \hbox{\vrule width .4pt height 5pt \kern 5pt
        \vrule width .4pt} \hrule height .4pt}}}
\def\ZZ{\mathbb{Z}}
\newcommand{\bi}{\text{Branching Interlacements}}
\newcommand{\dims}{\text{dim}_S}
\numberwithin{equation}{section} %\numberwithin{figure}{section}
\begin{document}
\title{Connectivity properties of Branching Interlacements}

\author{Eviatar B. Procaccia}
\address[Eviatar B. Procaccia\footnote{Research supported by NSF grant 1407558}]{Texas A\&M University}
\urladdr{www.math.tamu.edu/~procaccia}
\email{eviatarp@gmail.com}

\author{Yuan Zhang}
\address[Yuan Zhang]{Texas A\&M University}
\urladdr{www.math.tamu.edu/~yzhang1988}
\email{yzhang1988@math.tamu.edu }

\thanks{We would like to thank Bal\'azs R\'ath for presenting the $\bi$ model at the "Random spatial processes and dynamics concentration week" at Texas A\&M on April 2016.}

% \affiliation{University of California Los-Angeles}
\maketitle

\tableofcontents

\begin{abstract}
We consider connectivity properties of the Branching Interlacements model in $\BZ^d,~d\ge5$, recently introduced by Angel, R\'ath and Zhu \cite{angel2016branching}. Using stochastic dimension techniques we show that every two vertices visited by the branching interlacements are connected via at most $\lceil d/4\rceil$ conditioned critical branching random walks from the underlying Poisson process, and that this upper bound is sharp. In particular every such two branching random walks intersect if and only if $5\le d\le 8$. The stochastic dimension of branching random walk result is of independent interest. We additionally obtain heat kernel bounds for branching random walks conditioned on survival. 
\end{abstract}
%%%%%%%%%%%%%%%%%%%%%%%%%%%%%%%%%%%%%%%%%%%%%%%%%%%%%%%%%%%%%%%%%%%%%%%%%%%%%%%%%%%
\section{Introduction}
The model of Branching Interlacements, introduced by Angel, R\'ath and Zhu \cite{angel2016branching}, is a version of Sznitmann's Random Interlacements \cite{sznitmanvacant} composed of branching random walks on $\BZ^d,~d\ge 5$. This new model is proved to be (in \cite{angel2016branching}) the weak limit of a critical branching random walk on $\BZ^d/N\BZ^d$ conditioned to occupy $\lfloor uN^d\rfloor$ vertices in the torus. Analogous to Random Interlacements the $\bi$ can be realized as a Poisson Point Process over a space of transient trajectories in $\BZ^d,~d\ge 5$. Only here the trajectories stand for the range of an exploration processes over branching random walks. The main result of this paper is an analogue of the results in \cite{Random11,rath2012connectivity}, claiming that every two vertices visited by the Random Interlacements can be connected by $\lceil d/2\rceil$ trajectories from the underlying Poisson point process. Next we give a non formal statement of the main theorem (See Theorem \ref{thm:main} for the rigorous statement):

{\it Given that $x,y\in\BZ^d$ belong to the $\bi$ set, it is a.s. possible to find a path between $x$ and $y$ contained the in the trace left by at most $\lceil d/4 \rceil$ conditioned branching random walks from the underlying Poisson point process. More over this result is sharp in the sense that a.s. there are pairs of points in the $\bi$ which can not be connected by a path using the trace of $\lceil d/4 \rceil-1$ conditioned branching random walks from the underlying Poisson  point process. }

Throughout this paper, $C$ and $c$ will denote constants that may depend on other constant parameters such as the dimension. Their values can be different from place to place.

%%%%%%%%%%%%%%%%%%%%%%%%%%%%%%%%%%%%%%%%%%%%%%%%%%%%%%%%%%%%%%%%%%%%%%%%%%%%%%%%%%%%
\section{Preliminaries}
In this section we formally define the $\bi$ model and recall the concept of Stochastic Dimension \cite{benjamini2004geometry}. 

%%%%%%%%%%%%%%%%%%%%%%%%%%%%%%%%%%%%%%%%%%%%%%%%%%%%%%%%%%%%%%%%%%%%%%%%%%%%%
\subsection{Branching random walk}
First we define an unconditioned critical geometric branching random walk. It can be constructed from a simple random walk on $\ZZ^d$ indexed on a critical geometric Galton-Watson tree. I.e., let $T$ be a  critical geometric Galton-Watson tree with root $\rho$, of which all the oriented edges can be listed according to the exploration of this tree, which is the depth first search over the Galton-Watson tree. Moreover, such exploration also gives us a natural order of vertices in each generation. I.e., let $v_{n,k}$ be the $k$th vertex visited by the exploration in the $n$th generation, if the population size of the $n$th generation is greater than or equal to $k$. And we can also let $S_T(n)$ be the population size of the $n$th generation.

Then for each edge $e=v\to v'$ where $v$ is the ancestor of $v'$ in $T$, there is a unique $n_T(e)=n$ such that this oriented edge is visited at the $n$th step of the exploration. Let $\{Z(n)\}_{n=1}^\infty$ be a sequence of i.i.d. random variables uniform on $\left\{\pm i_1,\pm i_2,\cdots, \pm i_d  \right\}$ where $i_1,\cdots, i_d$ are the unit basis of $\ZZ^d$. We can assign each edge $e$ with $Z(n_T(e))$. Then note that given $T$ and for any vertex $v=v_{n,k}\in T$, there is a unique sequence of vertices $v_{1,k_1(n,k)},v_{2,k_2(n,k)},\cdots, v_{n-1,k_{n-1}(n,k)}$ which gives the sequence of all the ancestors of $v_{n,k}$. And let $e^{(n,k)}_i=v_{i,k_i(n,k)}\to v_{i+1,k_{i+1}(n,k)}$, $i=0,\cdots, n-1$, where $v_{0,k_0(n,k)}=\rho$ and $v_{n,k_n(n,k)}=v_{n,k}$. Thus we can have the mapping $BRW$(branching random walk) from each vertex $v_{n,k}$ to $\ZZ^d$ as follows: 
$$
BRW(v_{n,k})=\left\{
\begin{aligned}
&0, \ \ &{\rm \ if \ } n=0\\
&\sum_{i=0}^{n-1} Z(n_T(e^{(n,k)}_i)) \ \ &{\rm \ if \ } n>0
\end{aligned}
\right..
$$
Then under this mapping, we have a critical geometric branching random walk on $\ZZ^d$ starting at 0. And from the construction above, one can immediately see that for any $n$ and $k$, given a $T$ with $S_T(n)\ge k$, the distribution of $BRW(v_{n,k})$ is the same as that of a summation of $n$ i.i.d. unit $d-$dimensional uniform jumps, which is the same as the distribution of $X_n$, where $\{X_n\}_{n=1}^\infty$ is a simple random walk on $\ZZ^d$ starting from 0. 

Next we construct a branching random walk conditioned on survival. We start from the back bone simple random walk. Let $\{X_n, n\ge 0\}$ be a simple random walk with $X_0=x$, and $\sigma-$field $\mathfs{\tilde F}_N(x)=\sigma(X_0,\cdots, X_N)$. Then let $\{\hat F(n)\}_{n=0,1,\cdots}$ and $\{\hat B(n)\}_{n=0,1,\cdots}$ be independent families of i.i.d. branching random walks starting at 0 with increment distribution $G(1/2)$. For any $n\ge 0$, let
$$
F(n)=\hat F(n)+X_n
$$
and
$$
B(n)=\hat B(n)+X_n.
$$
Then by our construction here and the construction after Lemma 1.2 in \cite{Recurrent2012}, we have that 
$$
D(x)=\{F(0), B(0), F(1), B(1),\cdots \}
$$
has the same distribution as a critical geometric branching random walk conditioned on survival starting at $x$. And in this paper, we will call such a process as a {\bf double branching random walk}. We call
$$
D_f(x)=\{F(0), F(1),\cdots \}
$$
and
$$
D_b(x)=\{B(0), B(1),\cdots \}
$$
the forward and backward part of $D(x)$. And we denote the trace of $D(x)$ by
$$
T(x)=\bigcup_{n=0}^\infty\left({\rm Trace}\left(F(n)\right)\cup {\rm Trace}\left(B(n)\right)\right).
$$
Moreover, let 
$$
\mathfs{\hat F}_{N}(x)=\sigma\left(\hat F(1), \hat B(1),\cdots, \hat F(N), \hat B(N) \right),
$$
and
$$
\mathfs{F}_{N}(x)=\sigma\left(\mathfs{\hat F}_{N}(x), \mathfs{\tilde F}_{N}(x)\right).
$$
It is easy to see that for each constant $N$, $\mathfs{\hat F}_{N}(x)$ and $\mathfs{\tilde F}_{N}(x)$ are independent, $F(n), B(n)\in \mathfs{F}_{N}(x)$ for all $n\le N$. Moreover, for 
$$
D_N(x)=\{F(0), B(0), F(1), B(1),\cdots, F(N),B(N)\}
$$
and
$$
T_N(x)=\bigcup_{n=0}^N\left({\rm Trace}\left(F(n)\right)\cup {\rm Trace}\left(B(n)\right)\right)
$$
it is easy to see that $D_N(x),T_N(x)\in \mathfs{F}_{N}(x)$.

%%%%%%%%%%%%%%%%%%%%%%%%%%%%%%%%%%%%%%%%%%%%%%%%%%%%%%%%%%%%
\subsection{Construction of the Branching Interlacements}
For completeness we present the construction of Angel, R\'ath and Zhu \cite{angel2016branching}. For more details the reader is referred to their paper.

Let $W$ be the space of doubly-infinite nearest-neighbor trajectories in $\ZZ^d$ which tend to infinity as the time $n\to \pm \infty$, and $\mathfs{W}$ be the standard cylinder $\sigma$-algebra. And let 
$$
W^*=W\slash \sim, {\rm \ where \ } w\sim w'\Leftrightarrow w(\cdot)=w'(\cdot+k) \ {\rm for \ some \ } k\in \ZZ
$$
which is the space of equivalence classes of $W$ modulo time shift (equivalently define $\mathfs{W}^*$). Denote by $\pi$ the natural map from $W$ to $W^*$. For a finite set $K\subset\BZ^d$ denote by $W_K^0$, the set of trajectories $\gamma\in W$ which intersect $K$ and $\inf\{n:\gamma(n)\in K\}=0$.  Let $\prob_x$ be the measure on $W^*$ which is the law of the exploration on a double branching random walk rooted at $x$. Define $Q_K(\cdot)$ on $(W,\mathfs{W})$ by
$$
Q_K(A)=\sum_{x\in K}\prob_x[A\cap W_K^0],
$$
for any $K\subset\BZ^d$ and $A\in\mathfs{W}$. It is proved in \cite{angel2016branching} that there is a a unique $\sigma$-finite measure $\nu$ on $(W^*,\mathfs{W}^*)$ such that
$$
\ind_{W_K^*}\cdot \nu=\pi\circ Q_K,
$$
where $W_K^*=\pi(W_K^0)$. Now we can construct the underlying Poissson point process of the branching interlacements. Consider the set of point measures on $W^*\times\BR_+$
$$
\Omega=\{\omega=\sum_{i=1}^\infty\delta_{(w_i^*,u_i)}:w_i^*\in W^*, u_i\in\BR_+, \omega(W_K^*\times[0,u])<\infty,\text{ for every finite }K\subset\BZ^d\}
.$$
Now define $\prob$ as Poisson point process of intensity $\nu\times dx$ on $\Omega$, where $dx$ is the Lesbegue measure. For any given $0<u'<u$, let 
$$
\omega_{u',u}=\sum_{i=1}^\infty\delta_{w_i^*} \ind_{\{u'\le u_i<u\}}.
$$
In in the case where $u'=0$ we denote $\omega_u=\omega_{0,u}$. Now we denote the Branching Interlacements between levelsl $u'>0$ and $u>0$
$$
\CI^{u',u}=\bigcup_{\gamma\in {\rm supp}(\omega_{u',u})} \text{Trace}(\gamma)
,$$
And $\CI^u=\CI^{0,u}$.
Note that $\CI^u$ can be characterized by the following equation. For every finite $K\subset\BZ^d$,
$$
\prob[\CI^u\cap K=\emptyset]=e^{-u\widehat{\text{Cap}}(K)}
,$$
where $\widehat{\text{Cap}}(K)=\sum_{x\in K}\widehat{e}_K(x)$. Here $\widehat{e}_K(x)$ is the $\prob_x$-probability a double branching random walk rooted at $x$ belongs to $W_K^0$, and is called the branching equilibrium measure.
%
%Then for any $u>0$ consider $\omega_u$ to be a Poisson Point Process on $W^*$ with some intensity $u\cdot \nu$ and let $\mathcal{I}^u$ be the trace of such  Poisson Point Process. Then $\mathcal{I}^u$ is called the branching random interlacement at level $u$. For $\mathcal{I}^u$ we have for any $K\subset\subset \ZZ^d$
%$$
%P(\mathcal{I}^u\cap K=\O)=e^{-u{\rm \hat{cap}}(K)}
%$$
%where the branching capacity ${\rm \hat{cap}}(K)$ is 
%$$
%{\rm \hat{cap}}(K)=\sum_{x\in K}\hat{\rm e}_K(x)
%$$
%and $\hat{\rm e}_K(\cdot)$  is the branching equilibrium measure of $K$. Then for any $u$ we can consider the random set 

%%%%%%%%%%%%%%%%%%%%%%%%%%%%%%%%%%%%%%%%%%%%%%%%%%%%%%%%%%%%%%%%%%%%%%%%%%%%%
\subsection{Stochastic dimension}
In this section we recall some definitions and results from Benjamini, Kesten Peres Schramm 2004 \cite{benjamini2004geometry}, and adapt them to this paper. For $x,y\in\BZ^d$ let $\langle xy\rangle=\max\{|x-y|,1\}$. For any finite subset $W\subset \BZ^d$ and a tree $\tau$ on $W$ denote by $\langle \tau \rangle =\prod_{(x,y)\in\tau}\langle xy\rangle$. We define $\langle W\rangle=\min_\tau\langle \tau\rangle$, where the minimum is over all $|W|^{|W|-2}$ trees on the vertex set $W$. A random relation $\CR$ is a random subset $\CR\subset\BZ^d\times\BZ^d$. We write $x\CR y$ for $(x,y)\in\CR$. \begin{definition}
\label{dimension}
We say that a random relation $\CR$ has stochastic dimension $\alpha\in(0,d]$, and write $\dims(\CR)=\alpha$, if there is a constant $0<c<\infty$ such that
\begin{equation}
c\prob[c\CR y]\ge\langle xy\rangle^{\alpha-d}
\end{equation}
and
\begin{equation}
\prob[x\CR y, z\CR v]\le c\langle x y \rangle^{\alpha-d}\langle z v \rangle^{\alpha-d}+c\langle x y z v\rangle^{\alpha-d}
\end{equation}
for all $x,y,z,v\in\BZ^d$.
\end{definition}
For two random relations $\CR,\CL$ we consider the product defined by $x\CR\CL y$ if $\exists z\in\BZ^d$ such that $x\CR z$ and  $z\CL y$. Next we state two theorems proved in \cite{benjamini2004geometry}, we include them for reader convenience. 
\begin{theorem}\cite[Theorem 2.4]{benjamini2004geometry}
Let $\CL$ and $\CR$ be two independent random relations with stochastic dimensions. Then
$$
\dims(\CL\CR)=\min\{d,\dims(\CL)+\dims(\CR)\}
.$$
\end{theorem}

Note that if a relation has stochastic dimension $d$ then there is a uniform positive lower bound on the probability two vertices are in the relation. To push this to probability one, trivial tail sigma algebras are employed. 
\begin{definition}
Let $\CE$ be a random relation and $v\in\BZ^d$. We define the left, right and remote tail $\sigma$-algebras:
\begin{equation}
\begin{split}
&\mathfs{F}_\CE^L(v)=\bigcap_{K\subset\BZ^d\text{ finite}}\sigma\{v\CE x:x\notin K\},\\
&\mathfs{F}_\CE^R(v)=\bigcap_{K\subset\BZ^d\text{ finite}}\sigma\{x\CE v:x\notin K\},\\
&\mathfs{F}_\CE^{Rem}=\bigcap_{K_1,K_2\subset\BZ^d\text{ finite}}\sigma\{x\CE y:x\notin K_1,y\notin K_2\}.
\end{split}
\end{equation}
We say that $\CE$ is left (right) tail trivial if $\mathfs{F}_\CE^L(v)$ ($\mathfs{F}_\CE^R(v)$) is trivial for every $v\in\BZ^d$. We say that $\CE$ is remote tail trivial if $\mathfs{F}_\CE^{Rem}$ is trivial.
\end{definition}
With those definitions in hand we can state the 0-1 theorem for random relations.
\begin{theorem}\cite[Corollary 3.4]{benjamini2004geometry}
\label{0-1}
Let $m\ge 2$, and let $\{\CE_i\}_{i=1}^m$ be independent random relations such that $\dims(\CE_i)$ exists for all $i\le m$. Suppose that 
$$
\sum_{i=1}^m\dims(\CE_i)\ge d
,$$
in addition, $\CE_1$ is left tail trivial, $\CE_m$ is right tail trivial and $\CE_2,\ldots,\CE_{m-1}$ are remote tail trivial. Then for every $x,y\in\BZ^d$
$$
\prob[x\CE_1\CE_2\cdots\CE_m y]=1
.$$
\end{theorem}
%%%%%%%%%%%%%%%%%%%%%%%%%%%%%%%%%%%%%%%%%%%%%%%%%%%%%%%%%
\subsection{Statement of results}
First we formally define the random relations used in this paper. For any $u>u'\ge 0$, let 
$$
\mathcal{M}_{u',u}=\left\{(x,y)\in \ZZ^d\times \ZZ^d: \exists \gamma\in {\rm supp}(\omega_{u',u}), \ s.t. \  x,y\in \gamma\right\},
$$
be the random subset where two points both belong to one trajectory in the Poisson point process $\omega_{u',u}$, and again we denote $\mathcal{M}_{0,u}$ by $\mathcal{M}_{u}$. The main results of this paper are stated as follows: 
\begin{theorem}
\label{thm:1}
For the random relation $\CL$ and $\CR$ defined in \eqref{eq:L} and \eqref{eq:R}, and the random relation $\mathcal{M}_{u',u}$ for any $u>u'\ge 0$,
$$\dims(\CL)=\dims(\CR)=\dims(\CM_{u',u})=4.$$
\end{theorem}

\begin{theorem}
\label{thm:main}
For every $u>0$, and all $x,y\in\BZ^d$,
$$\prob\left[x\CM_u^{\lceil d/4\rceil}y|x,y\in\CI^u\right]=1.$$
In addition for every $u>0$,
$$
\prob\left[\exists x,y\in\CI^u, y\notin\{z:x\CM_u^{\lceil d/4\rceil-1}z\}\right]=1
.$$
\end{theorem}
The strategy of the proof goes as follows: In Section \ref{leftright}, we define random relations $\CL$ and $\CR$ independent to $\omega_u$, and in Sections \ref{upperbound} and \ref{sec:lowerbound} we prove all the upper and lower bounds to show that $\dims(\CL)=\dims(\CR)=\dims(\CM_{u',u})=4$, for every $u>u'\ge 0$. Next we define the random relation 
\beq
\label{random relation 1}
\CC=\CL\left( \prod_{i=2}^{\lceil d/4\rceil-1} \CM_{\frac{u(i-1)}{\lceil d/4\rceil},\frac{ui}{\lceil d/4\rceil}}\right)\CR.
\eeq
Following the condition of Theorem \ref{0-1}, we prove in Section \ref{leftright} that $\CL$ is left tail trivial and $\CR$ is right tail trivial. And in Section \ref{remote} we prove that $\CM_{u',u}$ is remote tail trivial. That concludes by Theorem \ref{0-1}, that 
\beq
\label{eq:C}
P\left[x\CC y \right]=1. 
\eeq
In Section \ref{main theorem} by stochastic domination and the same argument as in \cite{Random11} we prove Theorem \ref{thm:main}.

%%%%%%%%%%%%%%%%%%%%%%%%%%%%%%%%%%%%%%%%%%%%%%%%%%%%%%%%%%%%%%%%%%%%%%%%%%%%%%%%%%%
%%%%%%%%%%%%%%%%%%%%%%%%%%%%%%%%%%%%%%%%%%%%%%%%%%%%%%%%%%%%%%%%%%%%%%%%%%%%%%%%%%%
% !TEX root = Branching_interlacements.tex
%%%%%%%%%%%%%%%%%%%%%%%%%%%%%%%%%%%%%%%%%%%%%%%%%%%%%%%%%%%%%%%%%%%%%%%%%%%%%%%%%%%
\section{Left and Right Tail Trivialities}
\label{leftright}

  Here we consider the left and right tail triviality problems of a double branching random walk starting at point $x\in \ZZ^d$ conditioned on the backward part never returning to $x$. For all $x\in \ZZ^d$, let $\{\gamma(x), x\in \ZZ^d\}$ be the trace of independent double branching random walks starting at $x$, conditioned on the backward part never returning to $x$. Then we can define the following random subsets of $\ZZ^d\times \ZZ^d$:
\beq
\label{eq:L}
\mathcal{L}=\{(x,y)\in \ZZ^d, y\in \gamma(x)\}
\eeq
and 
\beq
\label{eq:R}
\mathcal{R}=\{(x,y)\in \ZZ^d, x\in \gamma(y)\}. 
\eeq
Moreover, we can introduce the notation $x\mathcal{L}y$ which is equivalent to $(x,y)\in \mathcal{L}$, and $x\mathcal{R}y$ which is equivalent to $(x,y)\in \mathcal{R}$, in terms of random relations. 

  The result we want to prove in this section is: 
\begin{lemma}
\label{lemma left 1}
The random relation $\mathcal{L}$ is left tail trivial and the random relation $\mathcal{R}$ is right tail trivial. 
\end{lemma}
\begin{proof}
By symmetry, we can without loss of generality concentrate on the left tail triviality of the random relation $\mathcal{L}$. We will show this by first proving the left tail triviality when we no longer conditioned on that backward part never returning to $x$. For all $x\in \ZZ^d$, let $\{T(x), x\in \ZZ^d\}$ be the trace of independent double branching random walks starting at $x$. Then we can define the following random subsets of $\ZZ^d\times \ZZ^d$:
$$
\mathcal{L}^0=\{(x,y)\in \ZZ^d, y\in T(x)\}
$$
and 
$$
\mathcal{R}^0=\{(x,y)\in \ZZ^d, x\in T(y)\}. 
$$
And we have the following lemma:
\begin{lemma}
\label{lemma left 1.5}
The random relation $\mathcal{L}^0$ is left tail trivial and the random relation $\mathcal{R}^0$ is right tail trivial. 
\end{lemma}
\begin{proof}
In order to show the left tail triviality in the lemma above, we first show the following result:
\begin{lemma}
\label{lemma left 2}
For any $N\ge 0$, any event $B\in \mathfs{F}_{N}(x)$, with $P(B)>0$ which has the form $B=\hat B\cap \tilde B$, where $\hat B\in \mathfs{\hat F}_{N}(x)$ and $\tilde B\in \mathfs{\tilde F}_{N}(x)$, and any event $A\in \mathfs{F}_{\mathcal{L}^0}^L(x)$, we always have $A$ and $B$ are independent.  
\end{lemma}
\begin{proof}
To show the lemma above, it suffices to show that for any $\ep>0$
$$
|P(A\cap B)-P(A)P(B)|<\ep.
$$
Consider the sigma-field for any $r>0$:
$$
\mathfs{F}_{\mathcal{L}^0}^{L,r}(x)=\sigma\left\{\ind_{x\mathcal{L}^0 y}: y\in B(x,r)^c\right\}.
$$
It is easy to see that $\mathfs{F}_{\mathcal{L}^0}^{L,r}(x)\supset \mathfs{F}_{\mathcal{L}^0}^{L}(x)$ for any $r$. Moreover, for any subset $S\subset B(x,r)^c$, we have 
$$
\{T(x)\cap S=\O\}\in \mathfs{F}_{\mathcal{L}^0}^{L,r}(x).
$$
We will first show that 
\begin{lemma}
\label{lemma left 3}
For any $N\ge 0$, any event $B\in \mathfs{F}_{N}(x)$, with $P(B)>0$ which has the form $B=\hat B\cap \tilde B$, where $\hat B\in \mathfs{\hat F}_{N}(x)$ and $\tilde B\in \mathfs{\tilde F}_{N}(x)$, and any $\ep>0$, there is a $r_2<\infty$ such that for any event $A\in \mathfs{F}_{\mathcal{L}^0}^{L,r_2}(x)$, we have 
$$
|P(A\cap B)-P(A)P(B)|\le \ep. 
$$
\end{lemma}
\begin{proof}
To find the $r_2$ we need in the lemma, we first consider a smaller $r_1$ that will be specified later in the proof, and 
$$
\tau_{r_1}(x)=\min\{n: X_n\in \partial B(x,r_1)\}
$$
which is a stopping time with respect to $\mathfs{\tilde F}_{N}(x)$ and $\mathfs{F}_{N}(x)$. If we look at the trace after its back bone random walk first hits $\partial B(x,r_1)$, i.e.
$$
T^{r_1}=\bigcup_{n=\tau_{r_1}(x)}^\infty \left({\rm Trace}\left(F(n)\right)\cup {\rm Trace}\left(B(n)\right)\right)
$$
and given $X_{\tau_{r_1}(x)}=x'\in \partial B(x,r_1)$, then it is a double branching random walk, starting at $x'$. Moreover, given $X_{\tau_{r_1}(x)}=x'\in \partial B(x,r_1)$, $T^{r_1}$ and $\mathfs{F}_{\tau_{r_1}(x)-1}(x)$ are conditionally independent. Then for $r_2>r_1$, define family of events 
$$
\Pi=\left\{\{T(x)\cap S=\O\}, S\subset  B(x,r_2)^c\right\}
$$
which is a $\pi-$field, where $\sigma(\Pi)=\mathfs{F}_{\mathcal{L}^0}^{L,r_2}(x)$. For any $A=\{T(x)\cap S=\O\}\in \Pi$, let event 
$$
A_{r_1}=\{T^{r_1}\cap S=\O\}
$$
which is measurable with respect to $\sigma(T^{r_1})$. It is easy to see that 
$$
A_{r_1}\cap \{T_{\tau_{r_1}(x)-1}(x)\cap B(x,r_2)^c=\O\}=A\cap \{T_{\tau_{r_1}(x)-1}(x)\cap B(x,r_2)^c=\O\}.
$$
Consider the family of event 
\begin{align*}
\mathfs{D}_{r_1,x}=&\left\{A\in \mathfs{F}_{\mathcal{L}^0}^{L,r_2}(x), \ \exists A_{r_1}\in \sigma(T^{r_1}) \ s.t. \ A_{r_1}\cap \{T_{\tau_{r_1}(x)-1}(x)\cap B(x,r_2)^c=\O\}\right.\\
&\hspace{3 in} =A\cap \{T_{\tau_{r_1}(x)-1}(x)\cap B(x,r_2)^c=\O\} \Big\}.
\end{align*}
Then it is easy to see that $\Omega\in \mathfs{D}_{r_1,x}$ and $\Pi\subset \mathfs{D}_{r_1,x}$. Moreover for any $A, \hat A\in \mathfs{D}_{r_1,x}$, $A\supset \hat A$
\beq
\label{sigma1}
\begin{aligned}
(A\cap \hat A^c)\cap \{T_{\tau_{r_1}(x)-1}(x)\cap B(x,r_2)^c=\O\}&=[A\cap \{T_{\tau_{r_1}(x)-1}(x)\cap B(x,r_2)^c=\O\}]\\
&\hspace{0.5 in}\cap [\hat A \cap \{T_{\tau_{r_1}(x)-1}(x)\cap B(x,r_2)^c=\O\}]^c\\
&=[A_{r_1}\cap \{T_{\tau_{r_1}(x)-1}(x)\cap B(x,r_2)^c=\O\}]\\
& \hspace{0.5 in}\cap [\hat A_{r_1} \cap \{T_{\tau_{r_1}(x)-1}(x)\cap B(x,r_2)^c=\O\}]^c\\
&=(A_{r_1}\cap \hat A_{r_1}^c)\cap \{T_{\tau_{r_1}(x)-1}(x)\cap B(x,r_2)^c=\O\}
\end{aligned}
\eeq
which implies that $A\cap \hat A^c\in \mathfs{D}_{r_1,x}$. And for any increasing sequence $A^{(n)}\uparrow \bar A$, we have 
\beq
\label{sigma2}
\begin{aligned}
\bar A \cap \{T_{\tau_{r_1}(x)-1}(x)\cap B(x,r_2)^c=\O\}&=\bigcup_{n=1}^\infty [A^{(n)}_{r_1}\cap \{T_{\tau_{r_1}(x)-1}(x)\cap B(x,r_2)^c=\O\}]\\
&=\left[ \bigcup_{n=1}^\infty A^{(n)}_{r_1}\right]\cap \{T_{\tau_{r_1}(x)-1}(x)\cap B(x,r_2)^c=\O\}
\end{aligned}
\eeq
which implies that $\bar A\in \mathfs{D}_{r_1,x}$. Thus by $\pi-\lambda$ Theorem, $\mathfs{D}_{r_1,x}=\mathfs{F}_{\mathcal{L}^0}^{L,r_2}(x)$, which implies that for any $A\in \mathfs{F}_{\mathcal{L}^0}^{L,r_2}(x)$ there is a $A_{r_1}\in  \sigma(T^{r_1})$ such that 
$$
A_{r_1}\cap \{T_{\tau_{r_1}(x)-1}(x)\cap B(x,r_2)^c=\O\}=A\cap \{T_{\tau_{r_1}(x)-1}(x)\cap B(x,r_2)^c=\O\}.
$$
and that 
$$
|P(A_{r_1})-P(A)|\le P\left(T_{\tau_{r_1}(x)-1}(x)\cap B(x,r_2)^c\not=\O \right).
$$
However, we have the event 
\begin{align*}
\left\{T_{\tau_{r_1}(x)-1}(x)\cap B(x,r_2)^c\not=\O \right\}=&\bigcup_{n=0}^\infty\bigcup_{z\in B(x,r_1-1)}\{X_n=z\}\cap \{\hat F_n\cap B(x,r_2)^c-z\not=\O\}\cap \{\tau_{r_1}(x)>n\}\\
\cup &\bigcup_{n=0}^\infty\bigcup_{z\in B(x,r_1-1)}\{X_n=z\}\cap \{\hat B_n\cap B(x,r_2)^c-z\not=\O\}\cap \{\tau_{r_1}(x)>n\}
\end{align*}
which implies that 
$$
P\left(T_{\tau_{r_1}(x)-1}(x)\cap B(x,r_2)^c\not=\O \right)\le 2\sum_{z\in B(x,r_1-1)} P(\hat Y\cap \{B(x,r_2)^c-z\}\not=\O)\left(\sum_{n=0}^\infty P(X_n=z)\right)
$$
where $\hat Y$ is any critical branching random walk starting at 0. Note that for simple random walk $\{X_n\}_{n=0}^\infty$ starting at $x$, there exists a constant $c<\infty$ such that 
$$
\sum_{n=0}^\infty P(X_n=x)\le c
$$
and 
$$
\sum_{n=0}^\infty P(X_n=z)\le c|x-z|^{-d+2}. 
$$
Moreover, for $r_2>r_1$, and any $z\in B(x,r_1)$, noting that the random of $\hat Y$ is smaller than or equal to the population of the corresponding Galton-Watson tree, we have 
\beq
\label{upperbound r}
P(\hat Y\cap \{B(x,r_2)^c-z\}\not=\O)\le P(|\hat Y|\ge r_2-r_1)\le \frac{1}{r_2-r_1}.
\eeq
Thus there exists a $C<\infty$ such that 
$$
P\left(T_{\tau_{r_1}(x)-1}(x)\cap B(x,r_2)^c\not=\O \right)\le \frac{C r_1^2}{r_2-r_1}
$$
which implies that for any $r_1$ and $r_2>4\ep^{-1}C r_1^2+r_1$, we have $P\left(T_{\tau_{r_1}(x)-1}(x)\cap B(x,r_2)^c\not=\O \right)\le \ep/4$. Noting that 
$$
|P(A\cap B)-P(A_{r_1}\cap B)|< \ep/4
$$
and that 
$$
|P(A) P(B)-P(A_{r_1})P(B)|< \ep/4,
$$
it is sufficient to prove that for a sufficiently large $r_1$ and sufficiently larger $r_2>8\ep^{-1}C r_1^2+r_1$, we always have 
\beq
\label{left 1}
|P(A_{r_1}\cap B)-P(A_{r_1})P(B)|<\ep/2.
\eeq
To show the inequality above, note that for any $r_1>N+1$, the back bone random walk will never exit $B(x,r_1-1) $ in the first $N$ steps, which implies that $B\in \mathfs{F}_{\tau_{r_1}(x)-1}(x)$. Moreover, since that $T^{r_1}$ and $\mathfs{F}_{\tau_{r_1}(x)-1}(x)$ are conditionally independent, given $X_{\tau_{r_1}(x)}=x'\in \partial B(x,r_1)$, and that $A_{r_1}$ is measurable with respect to $\sigma(T^{r_1})$, we have 
$$
P(A_{r_1}\cap B|X_{\tau_{r_1}(x)}=x')-P(A_{r_1}|X_{\tau_{r_1}(x)}=x')P(B|X_{\tau_{r_1}(x)}=x')
$$
which implies that 
\beq
\label{left 2}
|P(A_{r_1}\cap B)-P(A_{r_1})P(B)|\le P(B)\sum_{x'\in \partial B(x,r_1)} |P(X_{\tau_{r_1}(x)}=x')-P(X_{\tau_{r_1}(x)}=x'|B)|.
\eeq
To find a upper bound of the right hand side above, note that $B=\hat B\cap \tilde B$, where $\hat B\in \mathfs{\hat F}_{N}(x)$ and $\tilde B\in \mathfs{\tilde F}_{N}(x)$, and that $\mathfs{\hat F}_{N}(x)$ is independent to $\{X_n\}_{n=0}^\infty$. We have 
\beq
\label{left 3}
\sum_{x'\in \partial B(x,r_1)} |P(X_{\tau_{r_1}(x)}=x')-P(X_{\tau_{r_1}(x)}=x'|B)|=\sum_{x'\in \partial B(x,r_1)} |P(X_{\tau_{r_1}(x)}=x')-P(X_{\tau_{r_1}(x)}=x'|\tilde B)|.
\eeq
Moreover by strong Markov property, let 
$$
\tau_{2N}(x)=\min\{n: X_n\in \partial B(x,2N)\}.
$$
We have 
\beq
\label{left 4}
\begin{aligned}
&\sum_{x'\in \partial B(x,r_1)} |P(X_{\tau_{r_1}(x)}=x')-P(X_{\tau_{r_1}(x)}=x'|\tilde B)|\\
\le&\sum_{y'\in \partial B(x,2N)}P(X_{\tau_{2N}(x)}=y'|\tilde B)\sum_{x'\in \partial B(x,r_1)} |P(X_{\tau_{r_1}(x)}=x')-P(X_{\tau_{r_1}(x)}=x'|X_{\tau_{2N}(x)}=y')|.
\end{aligned}
\eeq
Finally, by maximum random walk coupling, for any $x$ and $y'\in B(x,2N)$, we can construct $\{X^{(1,y')}_n,X^{(2,y')}_n\}_{n=0}^\infty$ in the same probability space, where $\{X^{(1,y')}_n\}_{n=0}^\infty$ is a random walk starting at $x$ and $\{X^{(2,y')}_n\}_{n=0}^\infty$ is a random walk starting at $y'$ such that 
$$
\lim_{n\to\infty} P\left(X^{(1,y')}_m=X^{(2,y')}_m, \ \forall m\ge n \right)=1. 
$$
Thus there exists a $M$ such that for any $y'\in \partial B(x,2N)$
$$
P\left(X^{(1,y')}_m=X^{(2,y')}_m, \ \forall m\ge M \right)>1-\ep/4.
$$
Then for any $r_1>M+2N$, let $\tau_{r_1,i}$, $i=1,2$ be the first time that $\{X^{(i,y')}_n\}_{n=0}^\infty$ hits the boundary of $B(x,r_1)$. We have for any $y'\in \partial B(x,2N)$
\begin{align*}
\sum_{x'\in \partial B(x,r_1)} &|P(X_{\tau_{r_1}(x)}=x')-P(X_{\tau_{r_1}(x)}=x'|X_{\tau_{2N}(x)}=y')|\\
=&\sum_{x'\in \partial B(x,r_1)}|P(X^{(1,y')}_{\tau_{r_1,1}}=x')-P(X^{(2,y')}_{\tau_{r_1,2}}=x')|\\
\le&\sum_{x'\in \partial B(x,r_1)}P(X^{(1,y')}_{\tau_{r_1,1}}=x', X^{(2,y')}_{\tau_{r_1,2}}\not=x')+\sum_{x'\in \partial B(x,r_1)}P(X^{(1,y')}_{\tau_{r_1,1}}\not=x', X^{(2,y')}_{\tau_{r_1,2}}=x')\\
\le& 2\left[1- P\left(X^{(1,y')}_m=X^{(2,y')}_m, \ \forall m\ge r_1-2N \right)\right]\\
<&\ep/2  
\end{align*}
The second inequality above is a result of the fact that $\tau_{r_1,i}>r_1-2N$ and that once we have $X^{(1,y')}_m=X^{(2,y')}_m, \ \forall m\ge r_1-2N$, this will guarantee $\tau_{r_1,1}=\tau_{r_1,2}$ and $X^{(1,y')}_{\tau_{r_1,1}}=X^{(2,y')}_{\tau_{r_1,2}}$. Thus, combining \eqref{left 2}-\eqref{left 4}, we now have \eqref{left 1} and the proof of Lemma 1.3 is complete. 

\end{proof}

  With Lemma \ref{lemma left 3}, for any $A\in \mathfs{F}_{\mathcal{L}^0}^L(x)$, noting that $\mathfs{F}_{\mathcal{L}^0}^L(x)\subset \mathfs{F}_{\mathcal{L}^0}^{L,r_2}(x)$ for any $r_2$ and let $r_2\to\infty$, we have for any event $B\in \mathfs{F}_{N}(x)$, with $P(B)>0$ which has the form $B=\hat B\cap \tilde B$, where $\hat B\in \mathfs{\hat F}_{N}(x)$ and $\tilde B\in \mathfs{\tilde F}_{N}(x)$, and any $\ep>0$,
$$
|P(A\cap B)-P(A)P(B)|<\ep
$$
which implies that $A$ and $B$ are independent. 
\end{proof}
  With Lemma  \ref{lemma left 2}, since $\mathfs{F}_N$ can be seen as the product sigma-field of $\mathfs{\tilde F}_N\times \mathfs{\hat F}_N$ and any $A\in \mathfs{F}_{\mathcal{L}^0}^L(x)$ is already independent to any cylinder event $B=\hat B\cap \tilde B$, by $\pi-\lambda$ Theorem, $A$ is independent to $\mathfs{F}_N$ for any $N\ge 0$ which implies $A$ is independent with 
$$
\sigma\left(\bigcup_{N=0}^\infty \mathfs{F}_N\right)\supset \mathfs{F}_{\mathcal{L}^0}^L(x).
$$
Thus, $A$ is independent to itself, which implies that $P(A)=0$ or 1. Thus the random relation $\mathcal{L}^0$ is left trivial.  

\end{proof}

  With the lemma above, it is then easy to adapt this result to the left triviality of the random relations $\CL$ and $\CR$. Without loss of generality, for $\CL$, note that this random relation is the same as the random relation that $y$ is in the trace of a double branching random walk starting at $x$, conditioned under event $A_0=\{$the backward part of the double branching random walk never return to $x\}$. Thus we can consider the new probability space $(A_0,P(\cdot|A_0))$ and sigma-fields:
$$
\mathfs{F}_{\mathcal{L}^0,A_0}^{L,K}(x)=\sigma(\{x\mathcal{L}^0 y\}\cap A_0: y\in K^c)
$$
for all finite $K\subset \ZZ^d$ and 
$$
\mathfs{F}_{\mathcal{L}^0,A_0}^{L}(x)=\bigcap_{K\subset \ZZ^d \ {\rm finite}}\sigma(\{x\mathcal{L}^0 y\}\cap A_0: y\in K^c).
$$
And it is sufficient to show that for any $A\in \mathfs{F}_{\mathcal{L}^0,A_0}^{L}(x)$, $P(A)=0$ or $P(A_0)$. To show this, first for any finite $K$, we have the following family of events 
$$
\Pi=\left\{\{T(x)\cap S=\O\}\cap A_0, S\subset  K^c\right\}\subset \mathfs{F}_{\mathcal{L}^0,A_0}^{L,K}(x)
$$
a $\pi-$field and $\sigma(\Pi)=\mathfs{F}_{\mathcal{L}^0,A_0}^{L,K}(x)$. Then again by $\pi-\lambda$ Theorem we have 
$$
\mathfs{F}_{\mathcal{L}^0,A_0}^{L,K}(x)= \mathfs{F}_{\mathcal{L}^0}^{L,K}(x)\cap A_0
$$
which immediately implies that 
$$
\mathfs{F}_{\mathcal{L}^0,A_0}^{L}(x)=\mathfs{F}_{\mathcal{L}^0}^{L}(x)\cap A_0.
$$
Thus for any $A\in \mathfs{F}_{\mathcal{L}^0,A_0}^{L}(x)$, $P(A)=0$ or $P(A_0)$. And the proof of Lemma \ref{lemma left 1} is complete. 
\end{proof}

%%%%%%%%%%%%%%%%%%%%%%%%%%%%%%%%%%%%%%%%%%%%%%%%%%%%%%%%%%%%%%%%%%%%%%%%%%%%%%%%%%%
%%%%%%%%%%%%%%%%%%%%%%%%%%%%%%%%%%%%%%%%%%%%%%%%%%%%%%%%%%%%%%%%%%%%%%%%%%%%%%%%%%%

% !TEX root = Branching_interlacements.tex
%%%%%%%%%%%%%%%%%%%%%%%%%%%%%%%%%%%%%%%%%%%%%%%%%%%%%%%%%%%%%%%%%%%%%%%%%%%%%%%%%%%
\section{Remote Tail Trivialities}
\label{remote}
%\note{$$
%\mathcal{M}_u=\left\{(x,y)\in \ZZ^d\times \ZZ^d: \exists \gamma\in {\rm supp}(\omega_u), \ s.t. \  x,y\in \gamma\right\}
%$$
%and write the random relation $x \mathcal{M}_u y$ if $(x,y)\in \mathcal{M}_u$. Then for any random relation $\mathcal{E}$, we can define the remote triviality as follows: 
%\begin{definition}
%Let $\mathcal{E}$ be a random relation. Define the remote sigma-field corresponding to $x$ to be 
%$$
%\mathfs{F}_{\mathcal{E}}^{Rem}=\bigcap_{K\subset \ZZ^d \ {\rm finite}} \sigma\{\ind_{y\mathcal{E} x}: x,y\in K^c\}.
%$$
%We say $\mathcal{E}$ is remote tail trivial if $\mathfs{F}_{\mathcal{E}}^{Rem}$ is trivial. 
%\end{definition}}
  In this section we prove the following lemma
\begin{lemma}
\label{lemma remote 1}
Let $u>0$. The random relation $\mathcal{M}_u$ is remote tail trivial. 
\end{lemma}
\begin{proof}
In order to show this lemma, we need to combine the idea and results in the earlier research on random interlacement without branching \cite{Random11} and the techniques we developed for left/right triviality.  For each finite subset $K\subset\subset \ZZ^d$, consider the sigma-fields
$$
\mathfs{F}_K=\sigma\left\{ x \mathcal{M}_u y: \  x,y\in K\right\}
$$
and 
$$
\mathfs{F}^{K}=\sigma\left\{ x \mathcal{M}_u y: \  x,y\in K^c\right\}.
$$
If we can prove that for any $A\in \mathfs{F}_{\mathcal{M}_u}^{Rem}$ and any $K\subset\subset \ZZ^d$, we always have that $A$ is independent to $\mathfs{F}_K$, then we will have $A$ is independent to itself which implies $P(A)=0$ or 1. Thus it is sufficient to show that 
\begin{lemma}
\label{lemma remote 2}
For any $K\subset\subset \ZZ^d$, any event $B\in \mathfs{F}_K$, and any $\ep>0$ there exists a $r_4<\infty$ such that for any event $A\in \mathfs{F}^{B(0,r_4)}$, 
$$
|P(A\cap B)-P(A)P(B)|\le2\ep. 
$$
\end{lemma}
\begin{proof} For any finite $K\subset\subset \ZZ^d$ and $r>0$ such that $B(0,r)\supset K$, we can look at the branching interlacement restricted to the trajectories that intersect $B(0,r)$. I.e., for 
$$
\omega_u=\sum_{i=1}^\infty \delta_{w_i^*}
$$ 
in the branching interlacement, we have
$$
\omega_u|_{W^*_{B(0,r)}}=\sum_{i=1}^\infty \delta_{w_i^*}\ind_{\{w_i^*\cap B(0,r)\not=\O\}}
$$
Let $\eta_{B(0,r)}=\omega_u|_{W^*_{B(0,r)}}(W^*)$ be the number of trajectories that  intersect$B(0,r)$. Similarly, we can also let 
$$
\omega_u|_{W^*_{K}}=\sum_{i=1}^\infty \delta_{w_i^*}\ind_{\{w_i^*\cap K\not=\O\}}
$$
and $\eta_{K}=\omega_u|_{W^*_{K}}(W^*)$. Then according to the definition of the Poisson Point Process, $\eta_{K}$ and $\eta_{B(0,r)}$ are Poisson random variables with parameters equal to $u{\rm \widehat{cap}}(K)$ and  $u{\rm \widehat{cap}}(B(0,r))$ respectively. And $\eta_{B(0,r)}-\eta_{K}$ is also a Poisson random variable with parameter $u[{\rm \widehat{cap}}(B(0,r))-{\rm \widehat{cap}}(K)]$. Moreover, $\omega_u|_{W^*_{B(0,r)}}$ and $\omega_u-\omega_u|_{W^*_{B(0,r)}}$ are independent point measures. 

Also, the previous research introducing the branching interlacement \cite{angel2016branching} enables us to construct $\omega_u|_{W^*_{B(0,r)}}$ directly as follows: Let 
$$
\left\{\left\{X_n(x,i)\right\}_{n=1}^\infty, i=1,2,\cdots,x\in \partial B(0,r)\right\}
$$
be independent family of simple random walks in $\ZZ^d$ with initial values $X_0(x,i)=x$. And let 
$$
\left\{\left\{\hat F(n,x,i)\right\}_{n=1}^\infty, i=1,2,\cdots,x\in \partial B(0,r)\right\}
$$
where for each $x$, $i$ and $n$, $\hat F(n,x,i)$ is an independent copy of branching random walk starting at 0, $F(n,x,i)=X_n(x,i)+\hat F(n,x,i)$, and 
$$
\left\{\left\{\hat B(n,x,i)\right\}_{n=1}^\infty, i=1,2,\cdots,x\in \partial B(0,r)\right\}
$$
where for each $x$, $i$ and $n$, $\hat B(n,x,i)$ is a independent copy of branching random walk starting at 0, $B(n,x,i)=X_n(x,i)+\hat B(n,x,i)$. Moreover, let 
$$
\{n_x, x\in \partial B(0,r)\}
$$
be i.i.d. copies of Poisson random variables with $\lambda=u$ that are independent with everything else. Then according to the construction in the introduction, we have 
$$
F(x,i)=\{F(0,x,i), F(1,x,i), F(2,x,i),\cdots\}
$$
and 
$$
B(x,i)=\{B(0,x,i), B(1,x,i), B(2,x,i),\cdots\}
$$
has the same distribution as the forward/back part of a double branching random walk. We call them the $i$th {\it forward random walk} starting at $x$ and $i$th {\it backward random walk} starting at $x$ respectively. The trace of them are denoted by 
$$
T_f(x,i)=\bigcup_{n=0}^\infty {\rm Trace} F(n,x,i)
$$
and 
$$
T_b(x,i)=\bigcup_{n=0}^\infty {\rm Trace} B(n,x,i)
$$
Moreover, we call 
$$
D(x,i)=\{F(0,x,i),B(0,x,i), F(1,x,i),B(1,x,i), F(2,x,i),B(2,x,i),\cdots\},
$$
which has the same distribution as a double branching random walk rooted at $x$, the $i$th {\it double branching random walk} starting at $x$, whose trace is denoted by  
$$
T(x,i)=T_f(x,i)\cup T_b(x,i). 
$$
Finally, for each $x$ and $i$, let 
$$
\hat T_b'(0,x,i)=\{y: y\in {\rm Trace} B(0,x,i), \  y \ {\rm is \ the \ location \ of \ an \ offspring \ of \ the \ root}\}
$$
and $T_b'(0,x,i)=x+\hat T_b'(0,x,i)$. We can define 
$$
T'_b(x,i)=T_b'(0,x,i)\cup \bigcup_{n=1}^\infty {\rm Trace} B(n,x,i)
$$
Note that for each $x$ and $i$, under the exploration of the double branching random walk, $T(x,i)\in W$ (here we will not use a new notation). Thus we can construct the branching interlacement restricted on $B(0,r)$ as 
$$
\omega_u|_{W^*_{B(0,r)}}=\sum_{x\in \partial B(0,r)}\sum_{i=1}^\infty \delta_{\pi(T(x,i))} \ind_{\{i\le n_x, T'_b(x,i)\cap B(0,r)=\O\}}, 
$$
where $\pi(\cdot)$ is the natural mapping from $W$ to $W^*$ applied to the exploration of the double random walks, which implies 
$$
\eta_{B(0,r)}=\sum_{x\in \partial B(0,r)}\sum_{i=1}^\infty \ind_{\{i\le n_x, T'_b(x,i)\cap B(0,r)=\O\}}
$$
is a Poisson random variables with parameters equal to $u{\rm \widehat{cap}}(B(0,r))$. Moreover, 
$$
\omega_u|_{W^*_{K}}=\sum_{x\in \partial B(0,r)}\sum_{i=1}^\infty \delta_{\pi(T(x,i))} \ind_{\{i\le n_x, T'_b(x,i)\cap B(0,r)=\O, T(x,i)\cap K\not=\O\}}, 
$$
and 
$$
\eta_{K}=\sum_{x\in \partial B(0,r)}\sum_{i=1}^\infty \ind_{\{i\le n_x, T'_b(x,i)\cap B(0,r)=\O, T(x,i)\cap K\not=\O\}}.
$$
Finally, for any $x$, $i$ and any $N$ which is either a constant integer or a stopping time with respect to $\left\{X_n(x,i)\right\}_{n=1}^\infty$, we can let 
$$
F_N(x,i)=\{F(0,x,i), F(1,x,i), F(2,x,i),\cdots, F(N,x,i)\},
$$
$$
B_N(x,i)=\{B(0,x,i), B(1,x,i), B(2,x,i),\cdots, B(N,x,i)\}
$$
and
$$
D_N(x,i)=\{F(0,x,i),B(0,x,i), F(1,x,i),B(1,x,i),\cdots, F(N,x,i),B(N,x,i)\}
$$
be the forward, backward and double processes {\bf until} the $N$th step of the back bone. And let 
$$
T_{N,f}(x,i)=\bigcup_{n=0}^N {\rm Trace} F(n,x,i),
$$ 
$$
T_{N,b}(x,i)=\bigcup_{n=0}^N {\rm Trace} B(n,x,i),
$$
$$
T_{N}(x,i)=T_{N,f}(x,i)\cup T_{N,b}(x,i),
$$
and 
$$
T'_{N,b}(x,i)=T_b'(0,x,i)\cup \bigcup_{n=1}^N {\rm Trace} B(n,x,i).
$$
 And we let 
$$
F^N(x,i)=\{F(N,x,i), F(N+1,x,i), F(N+2,x,i),\cdots\},
$$
$$
B^N(x,i)=\{B(N,x,i), B(N+1,x,i), B(N+2,x,i),\cdots\}
$$
and
$$
D^N(x,i)=\{F(N,x,i),B(N,x,i), F(N+1,x,i),B(N+1,x,i),\cdots\}
$$
be the forward, backward and double processes {\bf from} the $N$th step of the back bone, and  
$$
T^N_{f}(x,i)=\bigcup_{n=N}^\infty {\rm Trace} F(n,x,i),
$$ 
$$
T^N_{b}(x,i)=\bigcup_{n=N}^\infty {\rm Trace} B(n,x,i),
$$
$$
T^{N}(x,i)=T^N_{f}(x,i)\cup T^N_{b}(x,i). 
$$

  With the construction above, before we discuss the technical details of the proof, we first give an outline of the ideas we use in showing this result: 
\begin{itemize}
\item For any finite $K\subset\subset \ZZ^d$, we can consider a sufficiently large $r_1$ so that when looking at $\omega_u|_{W^*_{B(0,r_1)}}$, any event $\mathfs{F}_K$ will be almost independent to $\eta_{B(0,r_1)}$. 
\item Secondly, we can have a $r_2$ sufficiently larger than $r_1$ such that in the construction of $\omega_u|_{W^*_{B(0,r_1)}}$ above, we have with high probability all the double branching random walks will never return to $B(0,r_1)$ after their back bones first hit $\partial B(0,r_2)$. So the number of double branching random walks that survive up to reaching $\partial B(0,r_2)$ is with high probability the same as the number of those surviving forever. 
\item Then we restart those double branching random walks from $\partial B(0,r_2)$, and given an upper bound of the number of surviving copies, no matter what is the distribution of their initial values on $\partial B(0,r_2)$, there is a $r_3$ sufficiently larger than $r_2$ such that the distribution of the locations when each of the back bone random walk first hits $\partial B(0,r_3)$ are almost independent to the initial values on $\partial B(0,r_2)$. 
\item Finally, there is a $r_4$ sufficiently larger than $r_3$ such that with high probability all the branches in $\omega_u|_{W^*_{B(0,r_1)}}$ that start before the back bones exit $B(0,r_3)$ will not reach $B(0,r_4)^c$. So any $A\in \mathfs{F}^{B(0,r_4)}$ can ``almost" be determined by those double branching random walks restarted from $\partial B(0,r_3)$ and the independent trajectories in $\omega_u-\omega_u|_{W^*_{B(0,r_1)}}$. 
\end{itemize}  

  First for any given finite $K\subset\subset \ZZ^d$, and given $B\in \mathfs{F}_K$, note that $\eta_{B(0,r_1)}=\eta_{K}+[\eta_{B(0,r_1)}-\eta_{K}]$, where 
$$
[\eta_{B(0,r_1)}-\eta_{K}]\in\sigma\left(\omega_u|_{W^*_{B(0,r_1)}}-\omega_u|_{W^*_{K}}\right)
$$
which is independent to $\omega_u|_{W^*_{K}}$ and $\mathfs{F}_k$. Thus according to Lemma 3.3 of \cite{Random11}, we have that there is a $r_1$ sufficiently large such that 
$$
\sum_{n=0}^\infty |P(\eta_{B(0,r_1)}=n|B)-P(\eta_{B(0,r_1)}=n)|<\frac{\ep}{8}. 
$$
Then we construct $\omega_u|_{W^*_{B(0,r_1)}}$ using the construction described above, and let $N=\sum_{x\in \partial B(0,r_1)} n_x$ be a Poisson random variable with parameter $u|\partial B(0,r_1)|$. Then there exists a $D<\infty$ such that $P(N>D)<\ep/16$. For this given $r_1$, note that $\partial B(0,r_1)$ is finite, and we can give an order $\overline>$ to all items in it.  

  Moreover, for any $r>r_1$, $x\in \partial B(0,r_1)$ and $i\ge 1$ let 
$$
\tau_{x,i,r}=\inf\{n: X_n(x,i)\in \partial B(0,r)\}
$$
be the first time the $i$th back bone starting at $x$ hits $\partial B(0,r)$. And let 
$$
\eta_{r}=\sum_{x\in \partial B(0,r_1)}\sum_{i=1}^D \ind_{\{i\le n_x, T'_{\tau_{x,i,r},b}(x,i)\cap B(0,r)=\O\}}
$$
be (approximately) the number of trajectories in our construction that survive through thinning by its back bone first hits $\partial B(0,r)$. It is easy to see that 
$$
P\left(\eta_{r}\not= \eta_{B(0,r_1)}\right)\le P(N>D)+\sum_{x\in \partial B(0,r_1)}\sum_{i=1}^D P\left(T^{\tau_{x,i,r}}(x,i)\cap B(0,r_1)\not=\O \right).
$$
Note that given $X_{\tau_{x,i,r}}(x,i)=x'\in \partial B(0,r)$, $T^{\tau_{x,i,r}}(x,i)$ has the same distribution as the trace of a double branching random walk starting at $x'$. So according to Lemma \ref{lemma conditioned 1} there is a $c<\infty$ such that 
$$
P\left(T^{\tau_{x,i,r}}(x,i)\cap B(0,r_1)\not=\O \right)\le \frac{c|\partial B(0,r_1)|}{|r-r_1|^{d-4}}
$$
which implies there is a sufficiently large $r_2$ such that 
$$
P\left(\eta_{r_2}\not= \eta_{B(0,r_1)}\right)\le \frac{\ep}{16}+\frac{\ep}{16}=\frac{\ep}{8}. 
$$ 
Then consider the sigma-field:
\beq
\label{remote 0}
\mathfs{G}_{r_2}=\sigma\left(\sigma(\{n_x\}_{x\in \partial B(0,r_1)})\cup \bigcup_{x\in \partial B(0,r_1)}\bigcup_{i=1}^D \sigma\left(D_{\tau_{x,i,r_2}}(x,i)\right)\right).
\eeq
It is easy to see that $\eta_{r_2}\in \mathfs{G}_{r_2}$. Moreover, define
$$
E_{r_1,r_2}=\{N\le D\}\cap \bigcap_{x\in \partial B(0,r_1)}\bigcap_{i=1}^D \left\{T^{\tau_{x,i,r}}(x,i)\cap B(0,r_1)=\O \right\}.
$$
We have the following result on $\mathfs{F}_{B(0,r_1)}$. 
\begin{lemma}
\label{lemma remote 3}
For any event $B\in \mathfs{F}_{B(0,r_1)}$, there is an event $B_{r_2}\in \mathfs{G}_{r_2}$ such that 
$$
B\cap E_{r_1,r_2}=B_{r_2} \cap E_{r_1,r_2}.
$$
\end{lemma}
\begin{proof}
Consider the family of events in $\mathfs{F}_{B(0,r_1)}$ as follows: 
$$
\Pi=\left\{\bigcap_{i=1}^n\{x_i\mathcal{M}_u y_i\}: n<\infty, \ (x_i,y_i)\in B(0,r_1)\times B(0,r_1), \forall i=1,\cdots,n \right\}
$$
It is easy to see that $\Pi$ is a $\pi-$field, $\sigma(\Pi)=\mathfs{F}_{B(0,r_1)}$, and for any $B\in \Pi$, we have 
\begin{align*}
&\bigcap_{i=1}^n\{x_i\mathcal{M}_u y_i\}\cap E_{r_1,r_2}\\
=\Big(&\bigcap_{i=1}^n\bigcup_{x\in \partial B(0,r_1)}\bigcup_{i=1}^D \{n_x\ge i\}\cap\{T'_{\tau_{x,i,r},b}(x,i)\cap B(0,r_1)=\O\}\cap \{x,y\in T_{\tau_{x,i,r}}(x,i)\}\Big)\cap E_{r_1,r_2}
\end{align*}
so the lemma is satisfied in $\Pi$. Moreover, according to the same argument as in \eqref{sigma1} and \eqref{sigma2}, the family 
$$
\mathcal{D}=\{B\in \mathfs{F}_{B(0,r_1)}, \ B\cap E_{r_1,r_2}=B_{r_2} \cap E_{r_1,r_2}, \ {\rm for \ some \ } B_{r_2}\in \mathfs{G}_{r_2}\}\supset \Pi
$$
is a $\lambda-$field. The $\pi-\lambda$ Theorem finishes the proof of this lemma. 
\end{proof}
  With the lemma above we can immediately have for any $B\in \mathfs{F}_{B(0,r_1)}$,
\beq
\label{difference 1}
P(B\Delta B_{r_2})\le P(E_{r_1,r_2}^c)<\frac{\ep}{8}
\eeq
where $B\Delta B_{r_2}=\left(B\cap B_{r_2}^c\right)\cup \left(B^c\cap B_{r_2}\right)$, which implies that 
$$
|P(B)-P(B_{r_2})|\le P(E_{r_1,r_2}^c)<\frac{\ep}{8}. 
$$
And this is also true for any $B$ in the smaller sigma-field $\mathfs{F}_K$. Moreover, in the ordered set $\partial B(0,r_1)\times \{1,2,\cdots, D\}$, with the dictionary order $\widetilde>=\overline{>}\times >$, we can make the trajectories that survives until their back bones hit $\partial B(0,r_2)$ an ordered sequence. Let
$$
\vec a_1=\min\left\{(x,i): \ind_{n_x\ge i, T'_{\tau_{x,i,r_2},b}(x,i)\cap B(0,r_1)=\O}=1\right\}
$$ 
and 
$$
\vec a_k=\min\left\{(x,i) \widetilde>\vec a_{k-1}: \ind_{n_x\ge i, T'_{\tau_{x,i,r_2},b}(x,i)\cap B(0,r_1)=\O}=1\right\}
$$
for all $k\le \eta_{r_2}$. Let 
$$
\vec\xi_{r_2}=\left(X_{\tau_{\vec a_k,r_2}}(\vec a_k) \right)_{k=1}^{\eta_{r_2}}
$$
And for any $k\le \eta_{r_2}$ let 
$$
D_{r_2,k}=\left\{F(\tau_{\vec a_k,r_2},\vec a_k), B(\tau_{\vec a_k,r_2},\vec a_k), F(\tau_{\vec a_k,r_2}+1,\vec a_k), B(\tau_{\vec a_k,r_2}+1,\vec a_k),\cdots\right\}.
$$
and
$$
X_n^{r_2,k}=X_{\tau_{\vec a_k,r_2}+n}(\vec a_k).
$$
It is easy to see that given $\eta_{r_2}=n_0$ and $\vec\xi_{r_2}=\vec y$, $\big\{D_{r_2,k}\big\}_{k=1}^{\eta_{r_2}}$ has the same distribution as $n_0$ independent double branching random walks with initial values $\vec y$, with $\{X_n^{r_2,k}\}_{k=1}^{\eta_{r_2}}$ as their back bones, and they are also conditionally independent to $\mathfs{G}_{r_2}$. 

  For the $r_2$ we have above and $r_3\gg r_2$, given $\eta_{r_2}=n_0$ and $\vec\xi_{r_2}=\vec y$, consider the following stopping times with respect to $X_n^{r_2,k}$, $k\le n_0$: 
$$
\tau_{r_3,k}=\inf\left\{n: X_n^{r_2,k}\in \partial B(0,r_3)\right\}
$$
and let 
$$
\vec\gamma_{r_3}=\left(X_{\tau_{r_3,k}}^{r_2,k} \right)_{k=1}^{\eta_{r_2}}
$$
be the locations where each of those new back bone simple random walks first hit $\partial B(0,r_3)$. Then by maximum coupling theorem of random walk, see (3.18) of  \cite{Random11}, there is a $r_3$ sufficiently larger than $r_2$ such that for all $n_0\le D |\partial B(0,r_1)|$ and $\vec y\in \partial B(0,r_2)^{n_0}$
$$
\sum_{\vec x\in \partial B(0,r_3)^{n_0}} \left|P\left(\vec\gamma_{r_3}=\vec x |\eta_{r_2}=n_0, \vec\xi_{r_2}=\vec y\right)-P\left(\vec\gamma_{r_3}=\vec x |\eta_{r_2}=n_0\right) \right|<\frac{\ep}{8}. 
$$
Moreover, we can let $\big\{D_{r_2,k}^{\tau_{r_3,k}}\big\}_{k=1}^{\eta_{r_2}}$ be the double branching random walks $D_{r_2,k}$ restarted from $X_{\tau_{r_3,k}}^{r_2,k}$, which are conditionally independent to $\mathfs{G}_{r_3}$ given $\eta_{r_2}=n_0$  and $\vec\gamma_{r_3}=\vec x$, where  $\mathfs{G}_{r_3}$ is defined same as in \eqref{remote 0}

  Finally, according to \eqref{upperbound r}, for the $r_3$ we have above, we can have a $r_4$ such that with high probability any branches in the $D |\partial B(0,r_1)|$ trajectories which start before their back bones exiting $B(0,r_3)$ will never reach $\partial B(0,r_4)$. I.e., for each $x\in \partial B(0,r_1)$ and each $i\le D$, let 
$$
\tau_{r_3}(x,i)=\inf\{n: X_n(x,i)=B(0,r_3)\}
$$
and event 
$$
E_{r_3,r_4}=\bigcap_{x\in\partial B(0,r_1)}\bigcap_{i=1}^D \left\{T_{\tau_{r_3}(x,i)}(x,i)\cap \partial B(0,r_4)=\O\right\}.
$$
We have for a sufficiently large $r_4\gg r_3$, $P(E_{r_3,r_4})>1-\ep/8$. 

  At this point, we have finished the construction of $r_4$. Define sigma-field generated by the double branching random walks which survives up to $\tau_{\cdot, r_2}$ and restarting from $\partial B(0,r_3)$ above and the trajectories in $\omega_u-\omega_u|_{W^*_{B(0,r)}}$ as follows:
$$
\mathfs{H}_{r_3}=\sigma\left(\bigcup_{k=1}^{\eta_{r_2}}\sigma\left(D_{r_2,k}^{\tau_{r_3,k}}\right) \cup \sigma\left(\omega_u-\omega_u|_{W^*_{B(0,r_1)}}\right)\right).
$$
Here we denote 
$$
\bigcup_{k=1}^{\eta_{r_2}}\sigma\left(D_{r_2,k}^{\tau_{r_3,k}}\right)=\bigcup_{k=1}^{N|\partial B(0,r_1)|}\sigma\left(\bar D_{r_2,k}^{\tau_{r_3,k}}\right)
$$
where
$$
\bar D_{r_2,k}^{\tau_{r_3,k}}=\left\{
\begin{aligned}
&D_{r_2,k}^{\tau_{r_3,k}}, &\text{ if } \eta_{r_2}\ge k\\
&\O, &\text{ if } \eta_{r_2}< k
\end{aligned}
\right..
$$
Then we again have the following lemma stating that for any event $A\in \mathfs{F}^{B(0,r_4)}$, it is ``almost" also in $\mathfs{H}_{r_3}$.

\begin{lemma}
\label{lemma remote 4}
For any $A\in \mathfs{F}^{B(0,r_4)}$ there is a $A_{r_3}\in \mathfs{H}_{r_3}$ such that
$$
A\cap E_{r_1,r_2}\cap E_{r_3,r_4}=A_{r_3}\cap E_{r_1,r_2}\cap E_{r_3,r_4}
$$ 
\end{lemma} 
\begin{proof}
The proof of this lemma is similar to the previous one. Consider the family of events in $\mathfs{F}^{B(0,r_4)}$ as follows: 
$$
\Pi=\left\{\bigcap_{i=1}^n\{x_i\mathcal{M}_u y_i\}, : n<\infty, \ (x_i,y_i)\in B(0,r_4)^c\times B(0,r_4)^c, \forall i=1,\cdots,n \right\}
$$
It is easy to see that $\Pi$ is a $\pi-$field, $\sigma(\Pi)=\mathfs{F}^{B(0,r_4)}$, and for any $B\in \Pi$, we have 
\begin{align*}
&\bigcap_{i=1}^n\{x_i\mathcal{M}_u y_i\}\cap E_{r_1,r_2}\cap E_{r_3,r_4}=B_{r_3}\cap E_{r_1,r_2}\cap E_{r_3,r_4}
\end{align*}
where $B_{r_3}\in \mathfs{H}_{r_3}$ equals to 
\begin{align*}
\bigcup_{
\tiny
\begin{aligned}
A, B\subset \{1,2,\cdots n\}\\
A\cup B=\{1,2,\cdots n\}
\end{aligned}
}\left(\bigcap_{i\in A}\left\{\exists k\le \eta_{r_2}, \  x_i,y_i\in T_{r_2,k}^{\tau_{r_3,k}}\right\}\cap\bigcap_{i\in B}\left\{\exists \gamma\in {\rm supp}\left(\omega_u-\omega_u|_{W^*_{B(0,r_1)}}\right), x_i,y_i\in \gamma \right\}  \right).
\end{align*}
So the lemma is satisfied in $\Pi$. Moreover, according to the same argument as in \eqref{sigma1} and \eqref{sigma2}, the family 
$$
\mathcal{D}=\{B\in \mathfs{F}^{B(0,r_4)}, \ B\cap E_{r_1,r_2}\cap E_{r_3,r_4}=B_{r_3} \cap E_{r_1,r_2}\cap E_{r_3,r_4}, \ {\rm for \ some \ } B_{r_3}\in \mathfs{H}_{r_3}\}\supset \Pi
$$
is a $\lambda-$field. The $\pi-\lambda$ Theorem finishes the proof of this lemma. 
\end{proof}
  Same as before, the Lemma \ref{lemma remote 4} also implies that 
\beq
\label{difference 2}
P(A\Delta A_{r_3})\le P(E_{r_1,r_2}^c)+ P(E_{r_3,r_4}^c)\le \frac{\ep}{4}. 
\eeq
Moreover, for any cylinder event $A_{r_3}=\hat A_{r_3}\cap \tilde A_{r_3}\in \mathfs{H}_{r_3}$, where
$$
\hat A_{r_3}\in \sigma\left(\bigcup_{k=1}^{\eta_{r_2}}\sigma\left(D_{r_2,k}^{\tau_{r_3,k}}\right)\right)
$$
and 
$$
\tilde A_{r_3}\in \sigma\left(\omega_u-\omega_u|_{W^*_{B(0,r_1)}}\right),
$$
noting that $\omega_u-\omega_u|_{W^*_{B(0,r_1)}}$ is independent to $\omega_u|_{W^*_{B(0,r_1)}}$ and that 
$$
\sigma\left(\bigcup_{k=1}^{\eta_{r_2}}\sigma\left(D_{r_2,k}^{\tau_{r_3,k}}\right)\right)
$$
is conditionally independent to $\mathfs{G}_{r_3}$ given $\eta_{r_2}=n_0$  and $\vec\gamma_{r_3}=\vec x$, we have $A_{r_3}$ is also conditionally independent to $\mathfs{G}_{r_3}$ given $\eta_{r_2}=n_0$  and $\vec\gamma_{r_3}=\vec x$. And again by $\pi-\lambda$ Theorem, such conditional independence also holds for any $A_{r_3}\in \mathfs{H}_{r_3}$.  

  Now put everything we have together. For any $A\in  \mathfs{F}^{B(0,r_4)}$ and $B\in \mathfs{F}_K$, by \eqref{difference 1} and \eqref{difference 2}, 
\begin{align*}
|P(A\cap B)-P(A)P(B)|&\le 2P(A\Delta A_{r_3})+2P(B\Delta B_{r_2})+|P(A_{r_3}\cap B_{r_2})-P(A_{r_3})P(B_{r_2})|\\
&<\ep+|P(A_{r_3}\cap B_{r_2})-P(A_{r_3})P(B_{r_2})|. 
\end{align*}
And for $|P(A_{r_3}\cap B_{r_2})-P(A_{r_3})P(B_{r_2})|$, note that $A_{r_3}$ is conditionally independent to $\mathfs{G}_{r_3}$ given $\eta_{r_2}=n_0$  and $\vec\gamma_{r_3}=\vec x$, and that $B_{r_2}\in \mathfs{G}_{r_3}$. So we have that 
\begin{align*}
P(A_{r_3}\cap B_{r_2})&= \sum_{n=1}^{|\partial B(0,r_1)|D}\sum_{\vec x\in (\partial B(0,r_3))^{n_0}}P\left(A_{r_3}\cap B_{r_2}\cap\left\{\eta_{r_2}=n_0,\vec\gamma_{r_3}=\vec x\right\} \right)\\
&=\sum_{n=1}^{|\partial B(0,r_1)|D}\sum_{\vec x\in (\partial B(0,r_3))^{n_0}}P(A_{r_3}|\eta_{r_2}=n_0,\vec\gamma_{r_3}=\vec x)P\left(\left\{\eta_{r_2}=n_0,\vec\gamma_{r_3}=\vec x\right\} \cap B_{r_2} \right).
\end{align*}
And by total probability formula,  
$$
P(A_{r_3})=\sum_{n=1}^{|\partial B(0,r_1)|D}\sum_{\vec x\in (\partial B(0,r_3))^{n_0}}P(A_{r_3}|\eta_{r_2}=n_0,\vec\gamma_{r_3}=\vec x) P(\eta_{r_2}=n_0,\vec\gamma_{r_3}=\vec x).
$$
Thus we have
\beq
\label{remote 1}
|P(A_{r_3}\cap B_{r_2})-P(A_{r_3})P(B_{r_2})|\le {\rm Error},
\eeq
where 
$$
{\rm Error}= \sum_{n=1}^{|\partial B(0,r_1)|D}\sum_{\vec x\in (\partial B(0,r_3))^{n_0}} \left|P\left(\left\{\eta_{r_2}=n_0,\vec\gamma_{r_3}=\vec x\right\}\cap B_{r_2}\right)-P\left(\eta_{r_2}=n_0,\vec\gamma_{r_3}=\vec x\right)P(B_{r_2}) \right|.
$$
Moreover for any $n_0\le |\partial B(0,r_1)|D$ and $\vec y\in (\partial B(0,r_2))^{n_0}$, note that $\{X_n^{r_2,k}\}_{k=1}^{\eta_{r_2}}$ are conditionally independent to $\mathfs{G}_{r_2}$ given $\eta_{r_2}=n_0$ and $\vec\xi_{r_2}=\vec y$ and that $B_{r_2}\in \mathfs{G}_{r_2}$. Thus for any $n_0\le |\partial B(0,r_1)|D$ and $\vec x\in (\partial B(0,r_3))^{n_0}$, we have
\begin{align*}
&P\left(\left\{\eta_{r_2}=n_0,\vec\gamma_{r_3}=\vec x\right\}\cap B_{r_2}\right)\\
&\hspace{0.5 in} =\sum_{\vec y\in (\partial B(0,r_2))^{n_0}}P\left(\vec\gamma_{r_3}=\vec x\big| \eta_{r_2}=n_0,\vec\xi_{r_2}=\vec y\right)P\left(\left\{\eta_{r_2}=n_0,\vec\xi_{r_2}=\vec y\right\}\cap B_{r_2} \right)
\end{align*}
and 
\begin{align*}
&P(\{\eta_{r_2}=n_0\}\cap B_{r_2}) P\left(\vec\gamma_{r_3}=\vec x |\eta_{r_2}=n_0\right)\\
&=P\left(\vec\gamma_{r_3}=\vec x |\eta_{r_2}=n_0\right)\sum_{\vec y\in (\partial B(0,r_2))^{n_0}}P\left(\left\{\eta_{r_2}=n_0,\vec\xi_{r_2}=\vec y\right\}\cap B_{r_2} \right).
\end{align*}
So for any $n_0\le |\partial B(0,r_1)|D$ and $\vec x\in (\partial B(0,r_3))^{n_0}$, adding and subtracting $P(\{\eta_{r_2}=n_0\}\cap B_{r_2}) P\left(\vec\gamma_{r_3}=\vec x |\eta_{r_2}=n_0\right)$ at the same time, 
\begin{align*}
&\left|P\left(\left\{\eta_{r_2}=n_0,\vec\gamma_{r_3}=\vec x\right\}\cap B_{r_2}\right)-P\left(\eta_{r_2}=n_0,\vec\gamma_{r_3}=\vec x\right)P(B_{r_2}) \right|\\
\le&\sum_{\vec y\in (\partial B(0,r_2))^{n_0}}\left|P\left(\vec\gamma_{r_3}=\vec x |\eta_{r_2}=n_0, \vec\xi_{r_2}=\vec y\right)-P\left(\vec\gamma_{r_3}=\vec x |\eta_{r_2}=n_0\right) \right| P\left(\left\{\vec\xi_{r_2}=\vec y,\eta_{r_2}=n_0\right\}\cap B_{r_2} \right)\\
+&\left|P(B_{r_2})P(\eta_{r_2}=n_0)-P(\{\eta_{r_2}=n_0\}\cap B_{r_2}) \right|P\left(\vec\gamma_{r_3}=\vec x |\eta_{r_2}=n_0\right).
\end{align*}
Taking the summation over all $n\in \{1,2,\cdots, |\partial B(0,r_1)|D\}$ and $\vec x\in (\partial B(0,r_3))^{n_0}$, we have 
$$
{\rm Error}\le{\rm Error}_1+{\rm Error}_2.
$$
And we have 
\begin{align*}
{\rm Error}_1=&\sum_{n_0=1}^{|\partial B(0,r_1)|D}\sum_{\vec y\in (\partial B(0,r_2))^{n_0}}  P\left(\left\{\vec\xi_{r_2}=\vec y,\eta_{r_2}=n_0\right\}\cap B_{r_2} \right)\times\\
&\hspace{1 in} \sum_{\vec x\in (\partial B(0,r_3))^{n_0}}\left|P\left(\vec\gamma_{r_3}=\vec x |\eta_{r_2}=n_0, \vec\xi_{r_2}=\vec y\right)-P\left(\vec\gamma_{r_3}=\vec x |\eta_{r_2}=n_0\right) \right|\\
\le &\sum_{n_0=1}^{|\partial B(0,r_1)|D}\sum_{\vec y\in (\partial B(0,r_2))^{n_0}}  P\left(\left\{\vec\xi_{r_2}=\vec y,\eta_{r_2}=n_0\right\}\cap B_{r_2} \right)\times \frac{\ep}{8}\\
\le &\frac{\ep}{8}
\end{align*}
where the first inequality is a result of the choice of $r_3$. And
\begin{align*}
{\rm Error}_2&=\sum_{n_0=1}^{|\partial B(0,r_1)|D}\left|P(B_{r_2})P(\eta_{r_2}=n_0)-P(\{\eta_{r_2}=n_0\}\cap B_{r_2}) \right|\\
&\le \sum_{n_0=1}^{|\partial B(0,r_1)|D}\left|P(B)P(\eta_{B(0,r_1)}=n_0)-P(\{\eta_{B(0,r_1)}=n_0\}\cap B) \right|+2P(B\Delta B_{r_2})+4P(\eta_{r_2}\not=\eta_{B(0,r_1)})\\
&\le \frac{\ep}{8}+\frac{\ep}{4}+\frac{\ep}{2}=\frac{7\ep}{8}. 
\end{align*}
The second inequality is a result of the choice of $r_1$ and $r_2$. Thus we have 
$$
{\rm Error}\le{\rm Error}_1+{\rm Error}_2\le \ep
$$
and 
$$
|P(A\cap B)-P(A)P(B)|\le 2\ep. 
$$
Thus the proofs of Lemma \ref{lemma remote 1} and \ref{lemma remote 2}  are complete. 
\end{proof}

\end{proof}

%%%%%%%%%%%%%%%%%%%%%%%%%%%%%%%%%%%%%%%%%%%%%%%%%%%%%%%%%%%%%%
% !TEX root = Branching_interlacements.tex
%\documentclass[12pt]{article}
%\RequirePackage[colorlinks,citecolor=blue,urlcolor=blue]{hyperref}
%
%
%\usepackage{latexsym,amssymb,amsmath,amsfonts,amsthm}
%\usepackage{amsthm,amsmath}
%\usepackage{float}
%\usepackage{enumerate}
%\usepackage{epic}
%\usepackage[margin=1in]{geometry}
%\usepackage{bbm}
%\usepackage{subfigure}
%\usepackage{mathbbol}
%\usepackage{graphicx}
%\usepackage[toc,page]{appendix}
%\usepackage{multirow}
%\usepackage{amsfonts}
%\usepackage[all]{xy}
%\usepackage{caption}
%
%
%
%\newtheorem{theorem}{Theorem}
%\newtheorem{definition}{Definition}
%\newtheorem{lemma}{Lemma}[section]
%\newtheorem{remark}{Remark}
%\newtheorem{Proposition}{Proposition}
%\newtheorem{corollary}{Corollary}
%\newtheorem{conj}{Conjecture}
%
%\def\beq{ \begin{equation} }
%\def\eeq{ \end{equation} }
%\def\mn{\medskip\noindent}
%\def\ms{\medskip}
%\def\bs{\bigskip}
%\def\bn{\bigskip\noindent}
%\def\ep{\epsilon}
%\def\nass{\noalign{\smallskip}}
%\def\nams{\noalign{\smallskip}}
%\def\square{\vcenter{\vbox{\hrule height .4pt
%  \hbox{\vrule width .4pt height 5pt \kern 5pt
%        \vrule width .4pt} \hrule height .4pt}}}
%\def\eopt{\hfill$\square$}
%\def\RR{\mathbb{R}}
%\def\ZZ{\mathbb{Z}}
%\def\eqd{\,{\buildrel d \over =}\,}
%\def\var{\hbox{var}\,}
%\def\hbr{\hfill\break}
%\def\rlh{\rightleftharpoons}
%
%
%
%
%\begin{document}

\section{Upper Bounds for Branching Random Walks}
\label{upperbound}
Here we consider the upper bounds of the probabilities that the trace of a conditioned or unconditioned $d-$dimensional critical geometric branching random walk includes a certain set of cardinality 1 2 or 3.  
\subsection{Connection to One Point}
 Although the asymptotic of the probability that an unconditioned branching random walk hits one point was given in a recent research \cite{Branching2015}, we still give the lemma as follows, since the method we developed here will be useful in the discussion of the hitting probability to 2 or 3 points. For any $x\in \ZZ^d$, recalling the definition of the branching random walk, let 
$$
N_x=|\{(n,k): S_T(n)\ge k , BRW(v_{n,k})=x\}|
$$
which is the number of visits to $x$. We have the following lemma:
\begin{lemma}
\label{lemma unconditioned 1}
For $d\ge 3$, there are constant $c, C\in (0,\infty)$ such that for any $x\not=0\in \ZZ^d$
\beq
\label{unconditioned 1}
E[N_x]\in \left[c|x|^{-d+2}, C|x|^{-d+2}\right].
\eeq
Moreover, $E[N_0]<\infty$. 
\end{lemma}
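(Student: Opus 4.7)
My plan is to reduce the computation of $E[N_x]$ to the Green's function of an ordinary random walk via a many-to-one argument, and then invoke classical asymptotics. First I would write
\[
N_x \;=\; \sum_{n=0}^\infty Z_n(x),
\]
where $Z_n(x)$ denotes the number of particles at position $x$ in generation $n$. Because the offspring law is critical geometric (hence has mean $1$), the expected population in generation $n$ equals $1$, and conditional on a particle being present in generation $n$, its position is distributed as $S_n$, a random walk on $\ZZ^d$ whose step law coincides with the BRW displacement law. Linearity of expectation then gives
\[
E[N_x] \;=\; \sum_{n=0}^\infty P(S_n=x) \;=\; G(x),
\]
the Green's function of $S$.

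The case $x=0$ is immediate: for $d\ge 3$ the walk $S$ is transient, so $G(0)<\infty$. For $x\ne 0$, I would invoke the local central limit theorem
\[
P(S_n=x) \;=\; \frac{c_d}{n^{d/2}}\,\exp\!\left(-\frac{|x|^2}{2\sigma^2 n}\right)(1+o(1))
\]
and split the sum defining $G(x)$ into the ranges $n\le |x|^2$, $n\sim |x|^2$, and $n\ge |x|^2$. The dominant contribution comes from $n\asymp |x|^2$ and yields $G(x)\asymp |x|^{2-d}$; this is exactly the classical Green's function estimate for mean-zero, finite-variance, transient random walks on $\ZZ^d$, which supplies matching upper and lower constants simultaneously.

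The step that I expect to require the most care is the many-to-one reduction: I need to verify carefully that in the paper's $(n,k)$ labeling with the event $\{S_T(n)\ge k\}$, the spatial event $\{BRW(v_{n,k})=x\}$ decouples from the generation size $S_T(n)$ in expectation, so that $E[N_x]$ really collapses to a sum of one-particle probabilities $P(S_n=x)$. Once this factorization is in hand, the analytic part is standard, provided the BRW step distribution is mean zero, has finite second moment, and is aperiodic on its lattice --- hypotheses that should be inherited from the model's definition (with a routine passage to a sublattice if the walk is periodic).
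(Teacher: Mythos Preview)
Your proposal is correct and follows essentially the same approach as the paper: both reduce $E[N_x]$ to the Green's function $\sum_{n\ge 0}P(X_n=x)$ of the underlying random walk via a first-moment (many-to-one) argument exploiting that $E[S_T(n)]=1$ by criticality, and then invoke the classical $|x|^{2-d}$ asymptotic for the Green's function in $d\ge 3$. The paper carries out the factorization you flag as the delicate step by conditioning on the tree $T=T_0$ so that $BRW(v_{n,k})$ is distributed as $X_n$ independently of the tree structure, which is exactly the decoupling you anticipate.
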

\begin{proof}
Given $T=T_0$, according to the construction in the introduction, recalling that given a $T_0$ with $S_{T_0}(n)\ge k$, the distribution of $BRW(v_{n,k})$ is the same as the $n$th step of a simple random walk starting at 0, we have
\begin{align*}
E[N_x|T=T_0]&=\sum_{n=0}^\infty \sum_{k=1}^{S_{T_0}(n)} P\left(BRW(v_{n,k})=x| T=T_0\right)\\
&=\sum_{n=0}^\infty S_{T_0}(n) P(X_n=x). 
\end{align*}
Thus by the total probability theorem we have 
$$
E[N_x]=\sum_{n=0}^\infty E[S_{T}(n)] P(X_n=x),
$$
while $E[S_{T}(n)]=(E[G(1/2)]-1)E[S_{T}(n-1)]=1$ for all $n\ge 0$. Thus by the asymptotic of the Green function of simple random walk, we have 
\beq
\label{unconditioned 1 2}
E[N_x]=\sum_{n=0}^\infty  P(X_n=x)\in \left[c|x|^{-d+2}, C|x|^{-d+2}\right]
\eeq
for all $x\not=0$, and
$$
E[N_0]=\sum_{n=0}^\infty  P(X_n=0)<\infty. 
$$
\end{proof}
 As a direct result of the lemma above, we have $P(N_x>0)\le E[N_x]\le C |x|^{-d+2}$. Then for the double branching random walk, according to \cite{Recurrent2012}, we can again construct it from a critical geometric Galton-Watson tree $T_\infty$ conditioned to survive and a sequence of i.i.d. unit jumps in $\ZZ^d$.  Thus we can define 
$$
\bar N_x=|\{(n,k): S_T(n)\ge k , DBRW(v_{n,k})=x\}|
$$
where $DBRW(\cdot)$ is the mapping from the critical geometric Galton-Watson tree conditioned to survive to $\ZZ^d$. For the upper bound of $E[\bar N_x]$ we have the following result:
\begin{lemma}
\label{lemma conditioned 1}
For $d\ge 5$, there are constant $c, C\in (0,\infty)$ such that for any $x\not=0\in \ZZ^d$
$$
E[\bar N_x]\in \left[c|x|^{-d+4}, C|x|^{-d+4}\right].
$$
Moreover, $E[N_0]<\infty$. 
\end{lemma}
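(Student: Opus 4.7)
The plan is to mirror the proof of Lemma \ref{lemma unconditioned 1}, with the essential modification being that the constant expected generation size $E[S_T(n)]=1$ is replaced by the linearly growing expected generation size of Kesten's tree. The whole argument is driven by the identity $E[S_{T_\infty}(n)]=1+\sigma^2 n$, where $\sigma^2$ is the variance of the offspring distribution; the extra factor of $n$ is exactly what shifts the exponent from $|x|^{-d+2}$ to $|x|^{-d+4}$ and raises the critical dimension from $3$ to $5$.

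First, conditioning on $T_\infty=T_0$, exactly the argument of the previous lemma applies: given the tree, the position $DBRW(v_{n,k})$ is a sum of $n$ i.i.d.\ unit jumps, so it is distributed as $X_n$. Summing over $(n,k)$ with $k\le S_{T_0}(n)$ and then taking total expectation yields
$$
E[\bar N_x]=\sum_{n=0}^\infty E[S_{T_\infty}(n)]\,P(X_n=x).
$$

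The key input, and the main technical step, is to compute $E[S_{T_\infty}(n)]$. I would use the spine decomposition of $T_\infty$: there is a unique infinite spine, each spine vertex produces a size-biased number of children (of mean $1+\sigma^2$), the spine continues through one of them chosen uniformly at random, and the remaining off-spine children are roots of independent unconditioned critical Galton-Watson trees. Thus generation $n$ of $T_\infty$ consists of the spine vertex at depth $n$ together with, for each $k=0,1,\dots,n-1$, the depth-$(n-k-1)$ descendants of the off-spine children attached to the spine at depth $k$. Since $E[S_T(m)]=1$ and the expected number of off-spine children at each spine vertex is $\sigma^2$, one obtains
$$
E[S_{T_\infty}(n)]=1+\sigma^2 n.
$$

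Substituting this back and invoking the local CLT $P(X_n=x)\asymp n^{-d/2}\exp(-c|x|^2/n)$ (modulo parity), the sum $\sum_n(1+\sigma^2 n)P(X_n=x)$ is comparable to $\int_0^\infty t^{1-d/2}\exp(-c|x|^2/t)\,dt$; the substitution $u=|x|^2/t$ rewrites this as $|x|^{4-d}\int_0^\infty u^{d/2-3}e^{-cu}\,du$, a finite positive multiple of $|x|^{4-d}$ precisely when $d/2-3>-1$, i.e.\ when $d\ge 5$. Both the upper and the lower bound follow from the two-sided local CLT estimates. For $x=0$ the same argument gives $E[\bar N_0]\asymp \sum_n n^{1-d/2}<\infty$ under the same condition.

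The main obstacle is the Kesten-tree moment identity $E[S_{T_\infty}(n)]=1+\sigma^2 n$: once this is in place, the remainder of the argument is a routine adaptation of Lemma \ref{lemma unconditioned 1} with a heavier weight in $n$, and the threshold $d\ge 5$ emerges naturally from the convergence of $\int_0^\infty t^{1-d/2}e^{-c/t}\,dt$. A minor secondary issue is making the matching two-sided bound rigorous, which requires using the local CLT in the form $c_1 n^{-d/2}e^{-c_2|x|^2/n}\le P(X_n=x)\le c_3 n^{-d/2}e^{-c_4|x|^2/n}$ on the diffusive scale and a separate Gaussian tail bound off it; both are standard for simple random walk.
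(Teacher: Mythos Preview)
Your argument is correct, but it follows a genuinely different route from the paper's. You work \emph{temporally}: you derive the identity $E[\bar N_x]=\sum_{n\ge0}E[S_{T_\infty}(n)]P(X_n=x)$, feed in the Kesten-tree moment $E[S_{T_\infty}(n)]=1+\sigma^2 n$, and then estimate $\sum_n n\,P(X_n=x)\asymp|x|^{4-d}$ via the local CLT. The paper instead works \emph{spatially}: it uses the backbone decomposition of the double branching random walk to sandwich $E[\bar N_x]$ between constant multiples of the convolution $\sum_{y}G(0,y)\,E[N_{x-y}]\asymp\sum_{y\ne 0,x}|y|^{-d+2}|x-y|^{-d+2}$, and then bounds this Green's-function convolution by cutting $\ZZ^d$ into the balls $B(0,|x|/2)$, $B(x,|x|/2)$, the annulus $B(0,2|x|)$, and its complement. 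Your approach is arguably cleaner and more explanatory for this particular lemma, since the extra factor of $n$ visibly shifts the exponent by $2$ and the threshold $d\ge5$ drops out of the gamma integral. What the paper's approach buys, however, is the inequality
\[
\sum_{y\in\ZZ^d\setminus\{0,x\}}|x-y|^{-d+2}|y|^{-d+2}\le C|x|^{-d+4},
\]
which is recorded explicitly as \eqref{upper bound 1} and then reused repeatedly as the basic building block in the two-point and three-point estimates (Lemmas~\ref{lemma uncondition 2}--\ref{lemma condition 3}). If you continue with your method, you will still need to prove this spatial convolution bound separately for those later lemmas.
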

\begin{proof}
In order to show this lemma, from the construction of a double branching random walk in the Introduction we have for any $x\in \ZZ^d$
\begin{align*}
E[\bar N_x]&\le 2\sum_{n=0}^\infty\sum_{y\in \ZZ^d}P(X_n=y) E[N_{x-y}]\\
&\le 2\sum_{y\in \ZZ^d} E[N_{x-y}]\left(\sum_{n=0}^\infty P(X_n=y)\right)
\end{align*}
and
\begin{align*}
E[\bar N_x]&\ge \sum_{n=0}^\infty\sum_{y\in \ZZ^d}P(X_n=y) E[N_{x-y}]\\
&\ge \sum_{y\in \ZZ^d} E[N_{x-y}]\left(\sum_{n=0}^\infty P(X_n=y)\right). 
\end{align*}
Thus for $x=0$, we have 
\begin{align*}
E[\bar N_0]&\le2\sum_{y\in \ZZ^d} E[N_{-y}]\left(\sum_{n=0}^\infty P(X_n=y)\right)\\
&\le C+C\sum_{y\not=0\in \ZZ^d} |y|^{-2d+4}\\
&\le C+C \sum_{n=1}^\infty |y|^{-d+3}<\infty. 
\end{align*}
And for any $x\not=0$, we have 
\beq
\label{condition 1 1}
E[\bar N_x]\le C|x|^{-d+2}+C\sum_{y\in  \ZZ^d-\{0,x\}} |x-y|^{-d+2} |y|^{-d+2}. 
\eeq
To control the right hand side of \eqref{condition 1 1}, let $B_1=B(0,|x|/2)$, $B_2=B(x,|x|/2)$, and $B_3=B(0,2|x|)$. We have 
\beq
\label{condition 1 2}
\sum_{y\in  B_1-\{0\}} |x-y|^{-d+2} |y|^{-d+2}\le C(|x|/2)^{-d+2}\left(\sum_{n=1}^{|x|/2} n\right)\le C |x|^{-d+4},
\eeq
and 
\beq
\label{condition 1 2}
\sum_{y\in  B_2-\{x\}} |x-y|^{-d+2} |y|^{-d+2}\le C(|x|/2)^{-d+2}\left(\sum_{n=1}^{|x|/2} n\right)\le C |x|^{-d+4}. 
\eeq
Moreover, we have 
\beq
\label{condition 1 3}
\sum_{y\in  B_3-B_1-B_2} |x-y|^{-d+2} |y|^{-d+2}\le C(2|x|)^d (|x|/2)^{-2d+4}\le C |x|^{-d+4}, 
\eeq
and
\beq
\label{condition 1 4}
\sum_{y\in  B_3^c} |x-y|^{-d+2} |y|^{-d+2}\le C\left(\sum_{n=2|x|}^\infty n^{-d+3}\right)\le C |x|^{-d+4}.
\eeq
Combining all the terms we have in \eqref{condition 1 2}-\eqref{condition 1 4} gives the upper bound we want. On the other hand, note that there is a $c>0$ such that 
$$
|B_3-B_1-B_2|\ge c |x|^d
$$
and that 
$$
E[\bar N_x]\ge c\sum_{y\in  \ZZ^d-\{0,x\}} |x-y|^{-d+2} |y|^{-d+2}.
$$
Thus 
\beq
\label{condition 1 5}
E[\bar N_x]\ge c\sum_{y\in  B_3-B_1-B_2} |x-y|^{-d+2} |y|^{-d+2}\ge c(|x|)^d (2|x|)^{-2d+4}\ge c |x|^{-d+4}.
\eeq
And the proof of this lemma is complete. 
\end{proof}
 From the arguments above, it is important to note that there exists a $C<\infty$ such that for any $x\not=0$ 
\beq
\label{upper bound 1}
\sum_{y\in  \ZZ^d-\{0,x\}} |x-y|^{-d+2} |y|^{-d+2}\le C|x|^{-d+4}.
\eeq
This is an upper bound we will use again and again in the argument for 2 or 3 points.

\subsection{Connection to Two Points}

 If we have two different points $x,y\not=0$, then the following lemma controls the probability that $x$ and $y$ are both in the trace of an unconditioned critical geometric branching random walk. 
\begin{lemma}
\label{lemma uncondition 2}
There exists a $C<\infty$ such that for any $x,y\not=0, x\not=y$,
\beq
\label{uncondition 2}
\begin{aligned}
&P(N_x>0, N_y>0)\\
&\le C\left(|x|^{-d+4}|y|^{-d+2}+|y|^{-d+2}|x|^{-d+4}+|x-y|^{-d+4}|x|^{-d+2}+|x-y|^{-d+4}|y|^{-d+2} \right). 
\end{aligned}
\eeq
\end{lemma}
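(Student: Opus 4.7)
The plan is to proceed by a first-moment argument exploiting the tree structure of the BRW. Since $x\ne y$, $\mathbf{1}\{N_x>0,N_y>0\}\le N_xN_y$, so
$$
P(N_x>0,N_y>0)\le E[N_xN_y]=E\Big[\sum_{v_1\ne v_2\in T}\mathbf{1}\{BRW(v_1)=x,\,BRW(v_2)=y\}\Big].
$$
I would decompose these ordered pairs $(v_1,v_2)$ by their relation in the Galton--Watson tree: (i) $v_1$ is a proper ancestor of $v_2$, (ii) vice versa, or (iii) their most recent common ancestor $w$ lies strictly above both.

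For cases (i) and (ii), conditioning on the generations $|v_1|=k$ and $|v_2|=n>k$, the critical identity $E[S_T(n)]=1$ used already in Lemma \ref{lemma unconditioned 1} gives expected pair count $1$ for each $(k,n)$; the walk increments along the path to $v_1$ and then from $v_1$ to $v_2$ are independent, yielding contributions bounded by $G(x)G(y-x)$ and $G(y)G(x-y)$ respectively, where $G(z)=\sum_n P(X_n=z)\asymp|z|^{-d+2}$. For case (iii), conditioning on $w$ at position $z$ with $\xi_w$ offspring, two distinct subtrees of $w$ produce independent branching random walks started one step away from $z$; combining $E[S_T(k)]=1$ with the variance factor $E[\xi_w(\xi_w-1)]=\sigma^2<\infty$ for the geometric$(1/2)$ offspring distribution, the case-(iii) expected count is bounded by
$$
C\sum_{z\in\ZZ^d}G(z)\,G(x-z)\,G(y-z).
$$

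It then remains to bound this three-fold Green function convolution by the four terms on the right of \eqref{uncondition 2}. Following the region-splitting strategy already used in the proof of Lemma \ref{lemma conditioned 1}, I would cover $\ZZ^d$ by balls around each of $\{0,x,y\}$ of radius comparable to the relevant pairwise distance, together with the far-field complement. On $B(0,R)$ with $R=\min(|x|,|y|)/2$, the estimates $\sum_{|z|\le R}G(z)\le CR^2$ and $G(x-z)G(y-z)\le C|x|^{-d+2}|y|^{-d+2}$ yield a contribution of at most $C(|x|^{-d+4}|y|^{-d+2}+|y|^{-d+4}|x|^{-d+2})$; the balls around $x$ and $y$ give the two $|x-y|^{-d+4}$ terms; the far-field region is controlled by \eqref{upper bound 1} after summing out one of the three Green functions. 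Finally, the ancestor-case contributions $G(x)G(y-x)$ and $G(y)G(x-y)$ are absorbed into $|x-y|^{-d+4}|x|^{-d+2}$ and $|x-y|^{-d+4}|y|^{-d+2}$ respectively, using $|x-y|\ge 1$ for distinct lattice points.

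The principal obstacle is to justify the combinatorial identity in case (iii) cleanly: one must factor the expected number of triples $(w,v_1,v_2)$ with prescribed generations and MRCA $w$ into a product of independent Galton--Watson factors, and then couple this with the independence of the random-walk increments on the three disjoint tree paths emanating from $w$. Once this structural step is in place, the remaining estimates reduce to the region-splitting already performed in Lemma \ref{lemma conditioned 1}.
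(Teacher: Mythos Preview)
Your approach is correct and lands on the same key estimate as the paper---the triple Green-function convolution $\sum_z G(z)G(x-z)G(y-z)$---but the setup differs. The paper does not pass through the second moment $E[N_xN_y]$; it works directly with the event $\{N_x>0,N_y>0\}$, defines the most recent common ancestor $v_{n_{x,y},k_{x,y}}$ of the \emph{subtrees} reaching $x$ and $y$, and decomposes by its position $z$, bounding $P(N_{x-z}>0,N_{y-z}>0,n_{x-z,y-z}=0)\le C|x-z|^{-d+2}|y-z|^{-d+2}$ via independence of distinct first-generation subtrees. Your ancestor cases (i)--(ii) correspond to the boundary terms $z\in\{x,y\}$, which the paper disposes of in a remark. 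Your moment route is combinatorially cleaner and makes the factorisation in case~(iii) transparent, at the cost of bounding a quantity that is in principle larger; the paper's event-level MRCA decomposition is the template it reuses verbatim in the three-point lemmas. For the region split, the paper is simpler than what you propose: only two regions, $B(0,D)$ with $D=\min(|x|,|y|)/2$ and its complement, invoking \eqref{upper bound 1} on the latter---no separate balls around $x$, $y$, or far-field annulus are needed.
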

\begin{proof}
For any $x,y$ and any configuration of critical geometric branching random walk, noting that it is finite with probability one, we can define the {\it most recent common ancestor} of $x$ and $y$ as follows: First for any $n$ and $k$ let 
$$
T^{n,k}=\{v_{n',k'}: v_{n',k'} {\rm \ is \ an \ offspring \ of \ } v_{n,k}\}
$$
be all the offsprings of $v_{n,k}$ (and $T^{n,k}=\O$ if $k>S_T(n)$), and $B^{n,k}=BRW(T^{n,k})$ be their locations embedded to $\ZZ^d$, and $Tr^{n,k}$ be the trace of $B^{n,k}$. We can define 
$$
n_{x,y}=\sup\{n: \ \exists k\le S_T(n), \ s.t.\  \{x,y\}\subset Tr^{n,k}\}
$$
and 
$$
k_{x,y}=\sup\{k: \ s.t. \ \{x,y\}\subset Tr^{n_{x,y},k} \}.
$$
By definition, $n_{x,y}$ and $k_{x,y}$ are uniquely defined for any configuration under $\{N_x>0, N_y>0\}$. And if $n_{x,y}=0$, we call that $x$ and $y$ are {\bf separated from the root}. Moreover for any $n\ge 0,k\ge 1$ and $z\in \ZZ^d$, note that the event
$$
\{N_x>0, N_y>0, n_{x,y}=n, k_{x,y}=k, BRW(v_{n,k})=z\}
$$
is a subset of the following event:
$$
\{S_T(n)\ge k, BRW(v_{n,k})=z,  \{x,y\}\subset Tr^{n,k}, \ {\rm and \ }  \{x,y\}\nsubseteq Tr^{n',k'}, \forall v_{n',k'}\in T^{n,k}\}.
$$
In words, the event that $BRW(v_{n,k})=z$ and that $v_{n,k}$ is the most recent common ancestor of $x$ and $y$ must be included in the event that $BRW(v_{n,k})=z$, $v_{n,k}$ is a common ancestor of $x$ and $y$, and all offsprings of $v_{n,k}$ is not a common ancestor of $x$ and $y$. By doing this, we forget the fact that $v_{n,k}$ is the last particle in the $n$th generation according to the ordering of exploration/depth first search that has offsprings both at $x$ and $y$, we also forget about all the $(n+1)$th generation particles that are not an offspring of $v_{n,k}$. Thus
\beq
\label{uncondition 2 1}
\begin{aligned}
&P(N_x>0, N_y>0, n_{x,y}=n, k_{x,y}=k, BRW(v_{n,k})=z)\\
&\le P(S_T(n)\ge k, BRW(v_{n,k})=z) \\
&\hspace{0.4 in}P\left(\{x,y\}\subset Tr^{n,k}, \{x,y\}\nsubseteq Tr^{n',k'}, \forall v_{n',k'}\in T^{n,k}\big|S_T(n)\ge k, BRW(v_{n,k})=z\right).
\end{aligned}
\eeq

 Then note that given $S_T(n)\ge k$ and $BRW(v_{n,k})=z$, $B^{n,k}$ forms another critical geometric branching random walk starting at $z$, and that given $S_T(n)\ge k$ and $BRW(v_{n,k})=z$, the event $\{\{x,y\}\subset Tr^{n,k}, \{x,y\}\nsubseteq Tr^{n',k'}, \forall v_{n',k'}\in T^{n,k}\}$ is the same as the event that $x$ and $y$ are separated from the root in the new critical geometric branching random walk $B^{n,k}$. Thus we have 
\beq
\label{uncondition 2 2}
\begin{aligned}
&P\left(\{x,y\}\subset Tr^{n,k}, \{x,y\}\nsubseteq Tr^{n',k'}, \forall v_{n',k'}\in T^{n,k}\big|S_T(n)\ge k, BRW(v_{n,k})=z\right)\\
&=P(N_{x-z}>0, N_{y-z}>0, n_{x-z,y-z}=0).
\end{aligned}
\eeq
Combining the \eqref{uncondition 2 1} and  \eqref{uncondition 2 2}, we have 
\beq
\label{uncondition 2 3}
\begin{aligned}
&P(N_x>0, N_y>0, n_{x,y}=n, k_{x,y}=k, BRW(v_{n,k})=z)\\
&\le P(S_T(n)\ge k, BRW(v_{n,k})=z)P(N_{x-z}>0, N_{y-z}>0, n_{x-z,y-z}=0).
\end{aligned}
\eeq
Moreover, note that 
$$
\ind_{\{N_{x-z}>0, N_{y-z}>0, n_{x-z,y-z}=0\}} \le \sum_{i\not=j}  \ind_{\{x-z\in Tr^{1,i}\}} \ind_{\{y-z\in Tr^{1,j}\}} \ind_{S_T(1)\ge\max\{i,j\}}
$$
which implies that 
\beq
\label{uncondition 2 4}
\begin{aligned}
&P(N_{x-z}>0, N_{y-z}>0, n_{x-z,y-z}=0)\le  \sum_{i\not=j} P(S_T(1)\ge\max\{i,j\}) \\
& \hspace{2.5 in}P\left(\{x-z\in Tr^{1,i}\}\cap \{y-z\in Tr^{1,j}\}\big|S_T(1)\ge\max\{i,j\} \right).
\end{aligned}
\eeq
Then noting that given $S_T(1)\ge\max\{i,j\}$, $Tr^{1,i}$ and $Tr^{1,j}$ are the traces of two independent critical geometric branching random walk starting uniformly from the neighbors of 0, we have 
\beq
\label{uncondition 2 5 1}
\begin{aligned}
&P\left(\{x\in Tr^{1,i}\}\cap \{y\in Tr^{1,j}\}\big|S_T(1)\ge\max\{i,j\} \right)\\
&\le C \min\{1,||x-z|-1|^{-d+2}, |x-z|^{-d+2}\} \min\{1,||y-z|-1|^{-d+2}, |y-z|^{-d+2}\}\\
&\le C |x-z|^{-d+2}|y-z|^{-d+2}
\end{aligned}
\eeq
for all $z\not= x,y$. Thus, combining \eqref{uncondition 2 4} and \eqref{uncondition 2 5 1} we have 
\beq
\label{uncondition 2 5}
\begin{aligned}
P(N_{x-z}>0, N_{y-z}>0, n_{x-z,y-z}=0)&\le C E[S_T(1)^2-S_T(1)] |x-z|^{-d+2}|y-z|^{-d+2}\\
&\le C |x-z|^{-d+2}|y-z|^{-d+2}.
\end{aligned}
\eeq
\begin{remark}
We can without loss of generality simplify \eqref{uncondition 2 5 1} as above since we can always drop the finite number of terms of form $|x|^{-d+2}|x-y|^{-d+2}$ when $z=x$ or $y$, which do not have the leading order in our lemma.  
\end{remark}
 Taking the summation over all $n,k$ and $z$, and by \eqref{uncondition 2 1}, \eqref{uncondition 2 5} and Lemma \ref{lemma unconditioned 1}, we have
\beq
\label{uncondition 2 6}
\begin{aligned}
&P(N_x>0, N_y>0)\\
&=\sum_{z\in \ZZ^d} \sum_{n=0}^\infty \sum_{k=1}^\infty P(N_x>0, N_y>0, n_{x,y}=n, k_{x,y}=k, BRW(v_{n,k})=z)\\
&\le C\sum_{z\in \ZZ^d-\{x,y,0\}}\left[\sum_{n=0}^\infty \sum_{k=1}^\infty P(S_T(n)\ge k, BRW(v_{n,k})=z)\right] |x-z|^{-d+2}|y-z|^{-d+2}\\
&\le C\sum_{z\in \ZZ^d-\{x,y,0\}} |z|^{-d+2} |x-z|^{-d+2}|y-z|^{-d+2}. 
\end{aligned}
\eeq
To control the upper bound we have above, let $D=\min\{|x|,|y|\}/2$. For $z\in B(0, D)-\{0\}$ we have 
\begin{align*}
\sum_{z\in B(0, D)-\{0\}} |z|^{-d+2} |x-z|^{-d+2}|y-z|^{-d+2}&\le C\left(\sum_{1}^D n\right) (|x|-D)^{-d+2}(|y|-D)^{-d+2}\\
&\le C(|x|)^{-d+4}(|y|)^{-d+2}+C(|x|)^{-d+2}(|y|)^{-d+4}. 
\end{align*}
And for $z\in B(0, D)^c-\{x,y\}$, by \eqref{upper bound 1}
\begin{align*}
\sum_{z\in  B(0, D)^c-\{x,y\}} |z|^{-d+2} |x-z|^{-d+2}|y-z|^{-d+2}&\le D^{-d+2} \sum_{z\in  \ZZ^d-\{x,y\}}|x-z|^{-d+2}|y-z|^{-d+2}\\
&\le C D^{-d+2} |x-y|^{-d+4}\\
&\le C |x|^{-d+2} |x-y|^{-d+4}+C |y|^{-d+2} |x-y|^{-d+4}. 
\end{align*}
Combining the two inequalities above, the proof of this lemma is complete. 
\end{proof}

 And then for the double branching random walk, we have
\begin{lemma}
\label{lemma condition 2}
There exists a $C<\infty$ such that for any $x,y\not=0, x\not=y$,
\beq
\label{condition 2}
P(\bar N_x>0, \bar N_y>0)\le C\left(|x|^{-d+4}|y|^{-d+4}+|x-y|^{-d+4}|x|^{-d+4}+|x-y|^{-d+4}|y|^{-d+4} \right). 
\eeq
\end{lemma}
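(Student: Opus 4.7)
The plan is to mimic the strategy of Lemma \ref{lemma conditioned 1} for the two-point case, combining it with the most-recent-common-ancestor decomposition from the proof of Lemma \ref{lemma uncondition 2}. Recall that the DBRW is built from Kesten's conditioned critical tree $T_\infty$, whose infinite spine $v_0, v_1, \ldots$ is mapped under $DBRW$ to a simple random walk $X_n$ in $\ZZ^d$, with independent unconditioned critical BRWs (``bushes'') attached at each spine vertex, the number of bushes per vertex having finite moments.

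On the event $\{\bar N_x > 0, \bar N_y > 0\}$, the MRCA $v \in T_\infty$ of the visits to $x$ and $y$ is either off the spine, inside some bush (Case B), or a spine vertex $v_n$ (Case A). In Case B, with $v$ at position $z$ in a bush rooted at spine position $w$, the ``separated from root'' estimate \eqref{uncondition 2 5} bounds the relevant probability by $C|x-z|^{-d+2}|y-z|^{-d+2}$; summing over $w$ and $z$ and collapsing $\sum_w G(w)E[N_{z-w}] \le E[\bar N_z] \le C|z|^{-d+4}$ via Lemma \ref{lemma conditioned 1} yields
\[
P(\text{Case B}) \le C\sum_z |z|^{-d+4}|x-z|^{-d+2}|y-z|^{-d+2}.
\]
Case A splits into (A1) $x, y$ in two distinct bushes of $v_n$, bounded by $C\sum_w |w|^{-d+2}|x-w|^{-d+2}|y-w|^{-d+2}$ via independence of distinct bushes, and (A2, A2') one of them in a bush of $v_n$ and the other in the spine continuation below $v_{n+1}$; by translation invariance of Kesten's tree and Lemma \ref{lemma conditioned 1} applied to the shifted DBRW this gives
\[
P(\text{A2}) \le C\sum_w |w|^{-d+2}|x-w|^{-d+2}|y-w|^{-d+4}
\]
plus the symmetric bound for A2'.

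The remaining work is to control each of these four convolution sums by $C(|x|^{-d+4}|y|^{-d+4} + |x-y|^{-d+4}|x|^{-d+4} + |x-y|^{-d+4}|y|^{-d+4})$. The A1 sum is exactly the one bounded in Lemma \ref{lemma uncondition 2}, and each of its four output terms $|\cdot|^{-d+4}|\cdot|^{-d+2}$ is absorbed into a target term via $|u|^{-d+2} \le |u|^{-d+4}$ for $|u| \ge 1$. For the other three sums, I partition $\ZZ^d$ into balls around $0$, $x$, $y$ of radii $\min(|x|,|y|)/2$, $\min(|x|,|x-y|)/3$, $\min(|y|,|x-y|)/3$ plus the complementary far region, bound the ``heavy'' $|\cdot|^{-d+4}$ factor by its maximum in each region, and reduce the remaining sum to $\sum_{|u| \le D}|u|^{-d+2} \asymp D^2$, $\sum_{|u| \le D}|u|^{-d+4} \asymp D^4$, or (in the far region) the estimate $\sum_w|x-w|^{-d+2}|y-w|^{-d+2} \le C|x-y|^{-d+4}$ from \eqref{upper bound 1}. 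The main obstacle is the resulting case analysis on the ordering of $|x|, |y|, |x-y|$ needed to verify that every near-region contribution collapses into one of the three target terms; this follows systematically from the arithmetic identity $|u|^{-d+2}|u|^{2} = |u|^{-d+4}$.
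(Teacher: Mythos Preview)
Your argument is correct and arrives at the same convolution sums, but the decomposition is organized differently from the paper's. The paper does not take a global MRCA in $T_\infty$; instead it introduces spine stopping times $\tau_x$ and $\tau_y$ (the first spine index $n$ at which an attached bush $B(n)$ or $F(n)$ contains $x$, respectively $y$) and splits $\{\bar N_x>0,\bar N_y>0\}$ into $\{\tau_x<\tau_y<\infty\}$, $\{\tau_y<\tau_x<\infty\}$, and $\{\tau_x=\tau_y<\infty\}$. The first two cases coincide with your A2 and A2$'$ and yield exactly the sum $\sum_w|w|^{-d+2}|x-w|^{-d+2}|y-w|^{-d+4}$ and its symmetric partner. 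The case $\tau_x=\tau_y=n$ is handled by applying Lemma~\ref{lemma uncondition 2} as a black box to the bush(es) at $v_n$, which absorbs your Cases~A1 and~B in one stroke and produces the four terms of \eqref{uncondition 2} convolved with the spine Green's function $|z|^{-d+2}$. Your route instead collapses the spine--then--bush walk into $E[\bar N_z]\le C|z|^{-d+4}$ via Lemma~\ref{lemma conditioned 1}, giving the additional sum $\sum_z|z|^{-d+4}|x-z|^{-d+2}|y-z|^{-d+2}$; this is bounded by the same ball-splitting argument as \eqref{condition 2 2 0}--\eqref{condition 2 2 1} with the roles of $0$ and $y$ interchanged. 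The two organizations are equivalent: the paper's stopping-time filtration makes the conditional-independence structure explicit, while your global MRCA is closer in spirit to the proof of Lemma~\ref{lemma uncondition 2} itself and avoids re-invoking that lemma as a black box.
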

\begin{proof}
Recalling again the construction of the double branching random walk we have in the introduction, let 
$$
\tau_x=\inf\{n: x\in {\rm Trace}{B(n)}\cup {\rm Trace}{F(n)}\}
$$
and 
$$
\tau_y=\inf\{n: y\in {\rm Trace}{B(n)}\cup {\rm Trace}{F(n)}\}.
$$
Then it is easy to see that they are both stopping times with respect to the filtration of sigma fields  $\mathfs{F}_n=\sigma(X_0,\cdots, X_{n},\hat B(0),\cdots, \hat B(n),\hat F(0),\cdots, \hat D(n))$. Moreover, for any $n$, given $X_{n}=z$, $D^{n+1}=\{B(n+1),F(n+1), B(n+2),F(n+2),\cdots\}$ has the same distribution as a double branching random walk starting from a uniform distribution on the nearest neighbors of $z$. And $D^{n+1}$ is conditionally independent to $\mathfs{F}_n$ given $X_{n}=z$.  And since
$$
P(\bar N_x>0, \bar N_y>0)=P(\tau_x<\tau_y<\infty)+P(\tau_y<\tau_x<\infty)+P(\tau_x=\tau_y<\infty),
$$
to prove this lemma we only need to control each of  the three probabilities above. 

 For $P(\tau_x<\tau_y<\infty)$ and without loss of generality also $P(\tau_y<\tau_x<\infty)$, it is easy to see that 
\beq
\label{condition 2 1}
P(\tau_x<\tau_y<\infty)=\sum_{n=0}^\infty\sum_{z\in \ZZ^d} P(\tau_x=n<\tau_y<\infty, X_n=z).
\eeq
And that for each $n$ and $z$, we have event 
$$
\{\tau_x=n<\tau_y<\infty, X_n=z\}
$$
is a subset of event 
$$
\left\{X_n=z, x-z\in {\rm Trace}(\hat B(n))\cup {\rm Trace}(\hat F(n)),y\in {\rm Trace}(D^{n+1}) \right\}.
$$
Noting that 
$$
\left\{X_n=z, x-z\in {\rm Trace}(\hat B(n))\cup {\rm Trace}(\hat F(n))\right\}\in \mathfs{F}_n
$$
and that $\hat B(n)$ and $\hat F(n)$ is independent to $X_n$, 
\begin{align*}
P(&\tau_x=n<\tau_y<\infty, X_n=z)\\
&\le P(X_n=z)P\left(x-z\in {\rm Trace}(\hat B(n))\cup{\rm Trace}(\hat F(n))\right) P\left(y\in {\rm Trace}(D^{n+1})| X_n=z\right)\\
&\le CP(X_n=z) |x-z|^{-d+2} |y-z|^{-d+4}
\end{align*}
for all $z\not= x,y$. Thus we have 
\beq
\label{condition 2 2}
\begin{aligned}
P(\tau_x<\tau_y<\infty)&\le C \sum_{n=0}^\infty\sum_{z\in \ZZ^d-\{x,y\}} P(X_n=z) |x-z|^{-d+2} |y-z|^{-d+4}\\
&\le C \sum_{z\in \ZZ^d-\{0,x,y\}} |z|^{-d+2}  |x-z|^{-d+2} |y-z|^{-d+4}.
\end{aligned}
\eeq
To control the summation we have on the right hand side of \eqref{condition 2 2}, we again look at a neighborhood of $y$, and let $D=\min\{|y|,|x-y|\}/2$. Then
\beq
\label{condition 2 2 0}
\begin{aligned}
C\sum_{z\in B(y.D)-\{y\}}  |z|^{-d+2}  |x-z|^{-d+2} |y-z|^{-d+4}&\le C |D|^4 |y|^{-d+2}  |x-y|^{-d+2}\\
&\le C |y|^{-d+4}|x-y|^{-d+4},
\end{aligned}
\eeq
and by \eqref{upper bound 1}
\beq
\label{condition 2 2 1}
\begin{aligned}
C\sum_{z\in B(y.D)^c-\{0,x\}}|z|^{-d+2}  |x-z|^{-d+2} |y-z|^{-d+4}&\le C|D|^{-d+4} \sum_{z\in \ZZ^d-\{0,x\}}|z|^{-d+2}  |x-z|^{-d+2} \\
&\le C |y|^{-d+4} |x|^{-d+4}+ C|x-y|^{-d+4} |x|^{-d+4}. 
\end{aligned}
\eeq
Combining the two inequalities above, then we have both $P(\tau_x<\tau_y<\infty)$ and $P(\tau_y<\tau_x<\infty)$ be bounded by the form of the right hand side of \eqref{condition 2}. 

 And lastly for the part of $P(\tau_x=\tau_y<\infty)$, we again have the decomposition:
$$
P(\tau_x=\tau_y<\infty)=\sum_{n=0}^\infty\sum_{z\in \ZZ^d} P(\tau_x=\tau_y=n, X_n=z)
$$
and for each $n$ and $z$,
$$
P(\tau_x=\tau_y=n, X_n=z)\le CP(X_n=z)\left[P\left(N_{x-z}>0,N_{y-z}>0\right)+P(N_{x-z}>0)P(N_{y-z}>0)\right].
$$
Thus taking the leading order and by Lemma \ref{lemma uncondition 2} we have 
\beq
\label{condition 2 3}
\begin{aligned}
P(\tau_x=\tau_y<\infty)\le &C \sum_{z\in \ZZ^d-\{0,x,y\}} |z|^{-d+2}  |x-z|^{-d+2} |y-z|^{-d+4}\\
&+C \sum_{z\in \ZZ^d-\{0,x,y\}} |z|^{-d+2}  |x-z|^{-d+4} |y-z|^{-d+2}\\
&+C |x-y|^{-d+4}  \sum_{z\in \ZZ^d-\{0,x\}} |z|^{-d+2}|x-z|^{-d+2}\\
&+C|x-y|^{-d+4}  \sum_{z\in \ZZ^d-\{0,y\}} |z|^{-d+2}|y-z|^{-d+2}
\end{aligned}
\eeq
where the first two parts on the right hand side has been controlled above in \eqref{condition 2 2 0} and \eqref{condition 2 2 1}. Then by \eqref{upper bound 1},
$$
C |x-y|^{-d+4}  \sum_{z\in \ZZ^d-\{0,x\}} |z|^{-d+2}|x-z|^{-d+2}\le C |x-y|^{-d+4}|x|^{-d+4}
$$
and 
$$
C |x-y|^{-d+4}  \sum_{z\in \ZZ^d-\{0,y\}} |z|^{-d+2}|y-z|^{-d+2}\le C |x-y|^{-d+4}|y|^{-d+4}.
$$
Thus the probability $P(\tau_x=\tau_y<\infty)$ can also be bounded by the form of the right hand side in the lemma, and the proof of the lemma is complete. 
\end{proof}

\subsection{Connection to Three Points}

For the connection to three points $x$, $y$ and $z$, the argument is the same to the case of two points, but the notation can be very complicated. First let
$$
\begin{aligned}
&h(x,y,z)\\
&=|x|^{-d+2}|x-y|^{-d+4}|x-z|^{-d+4}+|x|^{-d+2}|x-y|^{-d+4}|y-z|^{-d+4}+|x|^{-d+2}|x-z|^{-d+4}|y-z|^{-d+4}\\
&+|y|^{-d+2}|x-y|^{-d+4}|y-z|^{-d+4}+|y|^{-d+2}|x-y|^{-d+4}|x-z|^{-d+4}+|y|^{-d+2}|y-z|^{-d+4}|x-z|^{-d+4}\\
&+|z|^{-d+2}|x-z|^{-d+4}|y-z|^{-d+4}+|z|^{-d+2}|x-z|^{-d+4}|x-y|^{-d+4}+|z|^{-d+2}|y-z|^{-d+4}|x-y|^{-d+4}\\
&+|x|^{-d+2}|y|^{-d+4}|y-z|^{-d+4}+|x|^{-d+2}|y|^{-d+4}|x-z|^{-d+4}\\
&+|x|^{-d+4}|y|^{-d+2}|y-z|^{-d+4}+|x|^{-d+4}|y|^{-d+2}|x-z|^{-d+4}\\
&+|x|^{-d+2}|z|^{-d+4}|x-y|^{-d+4}+|x|^{-d+2}|z|^{-d+4}|y-z|^{-d+4}\\
&+|x|^{-d+4}|z|^{-d+2}|x-y|^{-d+4}+|x|^{-d+4}|z|^{-d+2}|y-z|^{-d+4}\\
&+|y|^{-d+2}|z|^{-d+4}|x-y|^{-d+4}+|y|^{-d+2}|z|^{-d+4}|x-z|^{-d+4}\\
&+|y|^{-d+4}|z|^{-d+2}|x-y|^{-d+4}+|y|^{-d+4}|z|^{-d+2}|x-z|^{-d+4}\\
&+|x|^{-d+2}|y|^{-d+4}|z|^{-d+4}+|x|^{-d+4}|y|^{-d+2}|z|^{-d+4}+|x|^{-d+4}|y|^{-d+4}|z|^{-d+2}. 
\end{aligned}
$$
\normalsize
Then, for any three different $x,y,z\not=0$ and $\tau$ to be a tree on $\{0,x,y,z\}$, let $\langle\tau\rangle=\prod_{\{a,b\}\in \tau} |a-b|=\tau_1\cdot\tau_2\cdot\tau_3$, where $\tau_1, \tau_2, \tau_3$ are the lengths of the 3 edges in $\tau$, which is a product of 3 terms. 
$$
h_c(x,y,z)=\sum_{\tau} \langle\tau\rangle^{-d+4}=\sum_{\tau}\tau_1^{-d+4}\cdot\tau_2^{-d+4}\cdot\tau_3^{-d+4}
$$
which is a summations over 16 terms. Then we have for unconditioned  critical geometric branching random walk, 
\begin{lemma}
\label{lemma uncondition 3}
There exists a $C<\infty$ such that for any $x,y,z\not=0, x\not=y\not=z$,
\beq
\label{uncondition 3}
P(N_x>0,N_y>0,N_z>0)\le C h(x,y,z).
\eeq
\end{lemma}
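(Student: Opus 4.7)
The plan is to iterate the most-recent-common-ancestor (MRCA) decomposition from Lemma \ref{lemma uncondition 2}. Under $\{N_x>0, N_y>0, N_z>0\}$ I define $n_{x,y,z}$ and $k_{x,y,z}$ by the obvious analogues of $n_{x,y}, k_{x,y}$, so that $v_{n_{x,y,z},k_{x,y,z}}$ is the MRCA of the three points. Repeating the derivation of \eqref{uncondition 2 1}--\eqref{uncondition 2 3} verbatim (the only input is that, conditionally on its position and on being reached, the subtree $T^{n,k}$ embedded at $BRW(v_{n,k})$ is a fresh critical geometric BRW, independent of the target set cardinality) and summing the Lemma \ref{lemma unconditioned 1} factor $\sum_{n,k}P(S_T(n)\ge k,\,BRW(v_{n,k})=w)$ to $C|w|^{-d+2}$, I obtain
\[
P(N_x>0, N_y>0, N_z>0) \;\le\; C \sum_{w\in \ZZ^d} |w|^{-d+2}\, q(x-w,\, y-w,\, z-w),
\]
where $q(a,b,c)$ denotes $P(N_a>0, N_b>0, N_c>0,\, n_{a,b,c}=0)$.

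The next step is to bound $q(a,b,c)$. If the MRCA of $a, b, c$ is the root, the three points must be distributed among the first-generation subtrees $Tr^{1,i}$ so that no single subtree contains all three. Using the conditional independence of distinct first-generation subtrees (each a BRW started uniformly at a neighbor of $0$) together with the finiteness of the third factorial moment of $S_T(1)$ for the critical geometric offspring, exactly two topologies contribute: \textbf{(I)} the three points lie in three distinct first-generation subtrees, bounded by three applications of Lemma \ref{lemma unconditioned 1} to yield $C|a|^{-d+2}|b|^{-d+2}|c|^{-d+2}$; and \textbf{(II)} two of them share a subtree while the remaining point sits in another, bounded by Lemma \ref{lemma unconditioned 1} on the one-point subtree and Lemma \ref{lemma uncondition 2} on the two-point subtree, yielding $C|a|^{-d+2}P_2(b,c)$ together with the two symmetric variants, where $P_2(p,q)$ denotes the right-hand side of \eqref{uncondition 2}. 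Thus
\[
q(a,b,c) \;\le\; C\Bigl[\,|a|^{-d+2}|b|^{-d+2}|c|^{-d+2} + |a|^{-d+2}P_2(b,c) + |b|^{-d+2}P_2(a,c) + |c|^{-d+2}P_2(a,b)\Bigr].
\]

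The final step is to substitute this bound and control each resulting convolution in $w$. Each term is a three- or four-fold convolution of kernels of the form $|\,\cdot - w|^{-d+2}$ centred at a subset of $\{0,x,y,z\}$, possibly with one $|x-y|^{-d+4}$, $|x-z|^{-d+4}$ or $|y-z|^{-d+4}$ factor that is constant in $w$ and pulls out of the sum. I would bound each convolution by the region decomposition already used in the proofs of Lemmas \ref{lemma uncondition 2} and \ref{lemma condition 2}: split $\ZZ^d$ into balls of radius $D=\min\{|x|,|y|,|z|,|x-y|,|x-z|,|y-z|\}/2$ centred at each of $0, x, y, z$ together with the far complement, estimate the non-central kernel factors at the centre of the relevant ball, and collapse the remaining one-variable sum either directly (it is of order $D^2$) or via \eqref{upper bound 1}. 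The main obstacle is purely \emph{bookkeeping}: $h(x,y,z)$ contains $24$ monomials, each arising from a specific pairing of (tree topology at the MRCA, dominant ball in the convolution), and one must verify that every monomial produced by my bound is dominated by one of the $24$ in $h$. This follows the same pattern as \eqref{condition 2 2 0}--\eqref{condition 2 2 1} but must be carried out case by case; no new analytic input beyond \eqref{upper bound 1} together with Lemmas \ref{lemma unconditioned 1} and \ref{lemma uncondition 2} is required.
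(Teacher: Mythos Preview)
Your proposal is correct and matches the paper's approach: the MRCA decomposition together with your case split (I)/(II) produces exactly the two summation types the paper records (Type~1 arising from the $|b-c|^{-d+4}$ terms of $P_2$ in case~(II), Type~2 from the remaining (II) terms, with case~(I) dominated by Type~2), and each is then controlled by the ball--complement scheme you cite from \eqref{condition 2 2 0}--\eqref{condition 2 2 1}. One refinement worth flagging for the bookkeeping step: for the four-factor sum the paper centres a \emph{single} ball at the point carrying the $-d+4$ exponent, with radius equal to half the minimum distance from \emph{that} point to the other three (not the global minimum over all six pairwise distances); this choice is what prevents the complement bound $D^{-d+4}\cdot(\text{Lemma~\ref{lemma uncondition 2} sum over the remaining three points})$ from producing repeated-factor monomials outside $h$.
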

\begin{proof}
Again we look at the most recent common ancestor of $x,y,z$ and separate the event into different cases. With the same argument as we have in the proof of Lemma \ref{lemma uncondition 2} and \ref{lemma condition 2} on more different situations, we have there is a $C<\infty$ such that the probability $P(N_x>0,N_y>0,N_z>0)$ can be bounded by $C$ times the summations of the following two types: 
$$
C|y-z|^{-d+4}\sum_{w\in \ZZ^d-\{0,x,y\}}  |w|^{-d+2}|w-x|^{-d+2}|w-y|^{-d+2}
$$
and 
$$
C\sum_{w\in \ZZ^d-\{0,x,y,z\}} |w|^{-d+2}|w-x|^{-d+4}|w-y|^{-d+2}|w-z|^{-d+2}
$$ 
where the locations of $x$, $y$ and $z$ can be permuted over all possible orders. For the first type, by Lemma \ref{lemma uncondition 2} we have 
\begin{align*}
&|y-z|^{-d+4}\sum_{w\in \ZZ^d-\{0,x,y\}}  |w|^{-d+2}|w-x|^{-d+2}|w-y|^{-d+2}\\
&\le C|y-z|^{-d+4}\left(|x|^{-d+4}|y|^{-d+2}+|y|^{-d+2}|x|^{-d+4}+|x-y|^{-d+4}|x|^{-d+2}+|x-y|^{-d+4}|y|^{-d+2} \right)\\
&\le Ch(x,y,z). 
\end{align*}
And for the second type, let $D=\min\{|x|,|x-y|,|x-z|\}/2$. And for $B(x,D)$, we have 
\begin{align*}
&\sum_{w\in B(x,D)-\{x\}} |w|^{-d+2}|w-x|^{-d+4}|w-y|^{-d+2}|w-z|^{-d+2}\\
&\le C D^4 |x|^{-d+2}|x-y|^{-d+2}|x-z|^{-d+2}\\
&\le C|x|^{-d+2}|x-y|^{-d+4}|x-z|^{-d+4}\le h(x,y,z),
\end{align*}
and by Lemma \ref{lemma uncondition 2},
\begin{align*}
&\sum_{w\in B(x,D)^c-\{0,y,z\}} |w|^{-d+2}|w-x|^{-d+4}|w-y|^{-d+2}|w-z|^{-d+2}\\
&\le C D^{-d+4} \sum_{w\in \ZZ^d-\{0,y,z\}} |w|^{-d+2}|w-y|^{-d+2}|w-z|^{-d+2}\\
&\le C \left(|x|^{-d+4}+|x-y|^{-d+4}+|x-z|^{-d+4} \right)\\
&\hspace{1 in} \times \left(|y|^{-d+4}|z|^{-d+2}+|y|^{-d+2}|z|^{-d+4}+|y-z|^{-d+4}|y|^{-d+2}+|y-z|^{-d+4}|z|^{-d+2} \right)\\
&\le Ch(x,y,z). 
\end{align*}
Thus the proof of this lemma is complete. 
\end{proof}

 And similarly, for the double critical geometric branching random walk, we can also have 
\begin{lemma}
\label{lemma condition 3}
There exists a $C<\infty$ such that for any $x,y,z\not=0, x\not=y\not=z$,
\beq
\label{condition 3}
P(\bar N_x>0,\bar N_y>0,\bar N_z>0)\le C h_c(x,y,z).
\eeq
\end{lemma}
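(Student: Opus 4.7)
The plan is to mirror the proof of Lemma \ref{lemma condition 2}: define hitting times of $x,y,z$ by the trace of the double branching random walk, decompose into cases according to the order of these stopping times, and control each case using the strong Markov property of the backbone together with the unconditioned one-, two-, and three-point estimates from Lemmas \ref{lemma unconditioned 1}, \ref{lemma uncondition 2}, and \ref{lemma uncondition 3}.

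For each $w\in\{x,y,z\}$ set $\tau_w=\inf\{n: w\in \mathrm{Trace}(B(n))\cup \mathrm{Trace}(F(n))\}$. The event $\{\bar N_x>0,\bar N_y>0,\bar N_z>0\}$ splits into thirteen sub-events: six orderings in which $\tau_x,\tau_y,\tau_z$ are all distinct and finite, six configurations in which exactly two of them coincide (with the third strictly larger or smaller), and the single triple coincidence $\tau_x=\tau_y=\tau_z<\infty$. For a representative distinct case, say $\tau_x<\tau_y<\tau_z<\infty$, condition on the backbone positions $X_{\tau_x}=u$, $X_{\tau_y}=v$, $X_{\tau_z}=w$ and argue as in the derivation of \eqref{condition 2 2}: the tree at time $\tau_x$ must hit $x-u$, the tree at time $\tau_y$ must hit $y-v$, and the future DBRW $D^{\tau_z+1}$ must hit $z-w$. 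After summing out the three time increments via $\sum_n P(X_n=\cdot)\le C|\cdot|^{-d+2}$, this case is bounded by
\begin{equation*}
C\sum_{u,v,w}|u|^{-d+2}|v-u|^{-d+2}|w-v|^{-d+2}\,|x-u|^{-d+2}|y-v|^{-d+2}|z-w|^{-d+4}.
\end{equation*}
The cases with exactly two coincident times reduce analogously to double sums in which Lemma \ref{lemma uncondition 2} supplies, in place of two separate one-point factors, a single two-point factor at the coincident backbone site; the triple-coincidence case reduces, via Lemma \ref{lemma uncondition 3}, to $\sum_u |u|^{-d+2}\,h(x-u,y-u,z-u)$.

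The main obstacle is controlling these multi-variable sums by $C\,h_c(x,y,z)$. I would handle each integration variable in turn by iterating the ball-decomposition argument used in Lemmas \ref{lemma uncondition 2} and \ref{lemma condition 2}: for the current variable, set $D$ equal to half the smallest distance between its target points, split the sum into the ball of radius $D$ around one target and its complement, use the volume bound $D^{4}\le |a|^{2}|b|^{2}$ inside the ball, and invoke \eqref{upper bound 1} outside the ball to collapse two $|\cdot|^{-d+2}$ factors into one $|\cdot|^{-d+4}$ factor. After three such reductions each contribution becomes a product of three $|\cdot|^{-d+4}$ factors indexed by the edges of a spanning tree on $\{0,x,y,z\}$, i.e.\ a term of the form $\langle\tau\rangle^{-d+4}$; aggregating over the thirteen initial cases and all admissible elimination orders produces precisely the sixteen terms of $h_c(x,y,z)$. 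The hard work is the combinatorial bookkeeping of verifying that no case and no elimination order generates a tree factor outside $h_c$, which is exactly the source of the author's remark that ``the notation can be very complicated.''
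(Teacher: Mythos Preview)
Your overall strategy is sound, but there is a slip in the representative case: if you condition on $\tau_z$ and $X_{\tau_z}=w$, then $z$ is hit by the tree $\hat B(\tau_z)\cup\hat F(\tau_z)$, not by $D^{\tau_z+1}$, so the corresponding factor is $|z-w|^{-d+2}$ rather than $|z-w|^{-d+4}$. The exponent $-d+4$ only appears after you sum $w$ out via \eqref{upper bound 1}. With this correction your triple sum is consistent, and the iterated ball decomposition will indeed collapse it into products of three $|\cdot|^{-d+4}$ edge factors bounded by $h_c$.

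The paper follows the same stopping-time idea but organises the recursion more economically. Instead of conditioning on all three hitting times, it conditions only on the first one, say $\min(\tau_x,\tau_y,\tau_z)$ with backbone position $w$, and then applies the already-proved Lemma~\ref{lemma condition 2} directly to the future double branching random walk for the remaining two points; the double- and triple-coincidence cases are handled by Lemmas~\ref{lemma uncondition 2} and~\ref{lemma uncondition 3}. This produces only \emph{single}-variable sums over $w$, of five basic shapes such as
\[
\sum_{w}|w|^{-d+2}|w-x|^{-d+4}|w-y|^{-d+4}|w-z|^{-d+2},\qquad |y-z|^{-d+4}\sum_{w}|w|^{-d+2}|w-x|^{-d+4}|w-y|^{-d+2},
\]
and their permutations, each dispatched by at most two ball splits. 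Your approach is correct but unrolls one layer of recursion that the paper keeps packaged inside Lemma~\ref{lemma condition 2}; the price is exactly the heavier combinatorial bookkeeping you already anticipate.
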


\begin{proof}
With the same argument as we have before, we have there is a $C<\infty$ such that the probability $P(\bar N_x>0,\bar N_y>0,\bar N_z>0)$ can be bounded by $C$ times the summations of the following 5 types:
$$
C|y-z|^{-d+4}|x-z|^{-d+4}\sum_{w\in \ZZ^d-\{0,x,y\}}  |w|^{-d+2}|w-y|^{-d+2},
$$ 
$$
C|y-z|^{-d+4}|y-x|^{-d+4}\sum_{w\in \ZZ^d-\{0,x,y\}}  |w|^{-d+2}|w-y|^{-d+2},
$$ 
$$
C|y-z|^{-d+4}\sum_{w\in \ZZ^d-\{0,x,y\}}  |w|^{-d+2}|w-x|^{-d+4}|w-y|^{-d+2},
$$
$$
C|x-z|^{-d+4}\sum_{w\in \ZZ^d-\{0,x,y\}}  |w|^{-d+2}|w-x|^{-d+4}|w-y|^{-d+2},
$$
and 
$$
C\sum_{w\in \ZZ^d-\{0,x,y,z\}} |w|^{-d+2}|w-x|^{-d+4}|w-y|^{-d+4}|w-z|^{-d+2}
$$ 
where the locations of $x$, $y$ and $z$ can be permuted over all possible orders. For the first two types, we have by \eqref{upper bound 1}, 
$$
C|y-z|^{-d+4}|x-z|^{-d+4}\sum_{w\in \ZZ^d-\{0,x,y\}}  |w|^{-d+2}|w-y|^{-d+2}\le C|y-z|^{-d+4}|x-z|^{-d+4}|y|^{-d+4}\le Ch_c(x,y,z)
$$
and 
$$
C|y-z|^{-d+4}|y-x|^{-d+4}\sum_{w\in \ZZ^d-\{0,x,y\}}  |w|^{-d+2}|w-y|^{-d+2}\le C|y-z|^{-d+4}|y-x|^{-d+4}|y|^{-d+4}\le Ch_c(x,y,z).
$$
For the third and fourth type, again by Lemma \ref{lemma condition 2} we have 
\begin{align*}
&|y-z|^{-d+4}\sum_{w\in \ZZ^d-\{0,x,y\}}  |w|^{-d+2}|w-x|^{-d+4}|w-y|^{-d+2}\\
&\le C|y-z|^{-d+4}\left(|x|^{-d+4}|y|^{-d+4}+|x-y|^{-d+4}|x|^{-d+4}+|x-y|^{-d+4}|y|^{-d+4} \right)\\
&\le Ch_c(x,y,z),
\end{align*}
and 
\begin{align*}
&|x-z|^{-d+4}\sum_{w\in \ZZ^d-\{0,x,y\}}  |w|^{-d+2}|w-x|^{-d+4}|w-y|^{-d+2}\\
&\le C|x-z|^{-d+4}\left(|x|^{-d+4}|y|^{-d+4}+|x-y|^{-d+4}|x|^{-d+4}+|x-y|^{-d+4}|y|^{-d+4} \right)\\
&\le Ch_c(x,y,z).
\end{align*}
And for the fifth type, first let $D_1=\min\{|x|,|x-y|,|x-z|\}/2$. And for $B(x,D_1)$, we have 
\begin{align*}
&\sum_{w\in B(x,D_1)-\{x\}} |w|^{-d+2}|w-x|^{-d+4}|w-y|^{-d+4}|w-z|^{-d+2}\\
&\le C D^4 |x|^{-d+2}|x-y|^{-d+4}|x-z|^{-d+2}\\
&\le C|x|^{-d+4}|x-y|^{-d+4}|x-z|^{-d+4}\le h(x,y,z).
\end{align*}
Then let $D_2=\min\{|y|,|y-z|\}/2$. We have by the proof of Lemma \ref{lemma condition 2} , 
\begin{align*}
&\sum_{w\in B(x,D_1)^c\cap B(y,D_2)-\{y\}} |w|^{-d+2}|w-x|^{-d+4}|w-y|^{-d+4}|w-z|^{-d+2}\\
&\le D_1^{-d+4} \sum_{w\in B(y,D_2)-\{y\}} |w|^{-d+2}|w-y|^{-d+4}|w-z|^{-d+2}\\
&\le C  D_1^{-d+4} |y|^{-d+4}|y-z|^{-d+4}\\
&\le C\left( |x|^{-d+4}|y|^{-d+4}|y-z|^{-d+4}+ |x-y|^{-d+4}|y|^{-d+4}|y-z|^{-d+4}+ |x-z|^{-d+4}|y|^{-d+4}|y-z|^{-d+4}\right)\\
&\le Ch_c(x,y,z). 
\end{align*}
And finally, 
\begin{align*}
&\sum_{w\in B(x,D_1)^c\cap B(y,D_2)^c-\{0,z\}} |w|^{-d+2}|w-x|^{-d+4}|w-y|^{-d+4}|w-z|^{-d+2}\\
&\le D_1^{-d+4} D_2^{-d+4}\sum_{w\in \ZZ^d-\{0,z\}} |w|^{-d+2}|w-z|^{-d+2}\\
&\le CD_1^{-d+4} D_2^{-d+4}|z|^{-d+4}\\
&\le C\left(|x|^{-d+4}+|x-y|^{-d+4}+|x-z|^{-d+4}\right)\left(|y|^{-d+4}+|y-z|^{-d+4}\right)|z|^{-d+4}\\
&\le Ch_c(x,y,z). 
\end{align*}
Thus the proof of this lemma is complete. 
\end{proof}
% With the lemmas as above, it is easy to see that the same asymptotic results also holds for the double branching random walk, where we attach an independent forward and backward branching, both having the distribution of a unconditioned branching random walk, to each step of the back bone simple random walk. And for the branching random interlacement, the trajectories that ever hit $0$ can be constructed by applying thinning on $N=Poisson(u)$ of double branching random walks starting at 0. So the hitting probability is always less than or equal to that of the double branching random walks with out thinning, which is bounded by $u$ times the hitting probability of each of them. Thus, the probability that there exists a trajectory in the branching random interlacement which includes a set of capacity 1, 2 or 3 has the same asymptotic results as in the lemmas above. 

\section{Lower Bounds for Branching Random Walks}\label{sec:lowerbound}
In this section, we find the lower bounds of the probabilities that a  $d$-dimensional double branching random walk hits one point. First, the asymptotic of the probability that an unconditioned critical geometric branching random walk hits one point was given in a recent research \cite{Branching2015} as follows: There exist $c,C\in (0,\infty)$ such that for any $x\not=0$
\beq
\label{asymptotic 1}
c |x|^{-d+2}\le P(N_x>0)\le C |x|^{-d+2}.
\eeq
And for the double branching random walk, we show that 
\begin{lemma}
\label{lemma lower bound}
There exist $c,C\in (0,\infty)$ such that for any $x\not=0$
\beq
\label{asymptotic 2}
c |x|^{-d+4}\le P(\bar N_x>0)\le C |x|^{-d+4}.
\eeq
\end{lemma}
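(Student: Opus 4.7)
The plan is to use Markov's inequality for the upper bound and the Paley--Zygmund second moment inequality for the lower bound. The upper bound is immediate from Lemma~\ref{lemma conditioned 1}: $P(\bar N_x>0)\le E[\bar N_x]\le C|x|^{-d+4}$. For the lower bound, write $P(\bar N_x>0)\ge (E[\bar N_x])^2/E[\bar N_x^2]$; since Lemma~\ref{lemma conditioned 1} already supplies $E[\bar N_x]\ge c|x|^{-d+4}$, the entire task reduces to showing the matching upper bound $E[\bar N_x^2]\le C|x|^{-d+4}$.

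To bound $E[\bar N_x^2]$, I exploit the spine construction of the double branching random walk. Writing $\bar N_x=\sum_n M(n)$, with $M(n)$ counting visits to $x$ by the two independent critical BRWs $B(n),F(n)$ attached at the $n$th spine vertex $X_n$, the square expands into a diagonal $\sum_n E[M(n)^2]$ plus a cross term $2\sum_{n<n'}E[M(n)M(n')]$. Conditioning on the spine and invoking the single-BRW second moment bound $E[N_y^2]\le C|y|^{-d+2}$ (which follows from Paley--Zygmund combined with \eqref{asymptotic 1}, or equivalently from an MRCA decomposition of the kind used in Lemma~\ref{lemma uncondition 2}), the diagonal collapses via the spine Green function $\sum_n P(X_n=z)\asymp|z|^{-d+2}$ and the key estimate \eqref{upper bound 1} to
$$
\sum_n E[M(n)^2]\le C\sum_z|z|^{-d+2}|x-z|^{-d+2}\le C|x|^{-d+4}.
$$

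For the cross term, independence of the BRWs at different spine vertices gives $E[M(n)M(n')\mid X_n=z,\,X_{n'}=z']\le C|x-z|^{-d+2}|x-z'|^{-d+2}$, and the Markov property of the spine random walk converts $\sum_{n<n'}$ into the lattice convolution $\sum_{z,z'}|z|^{-d+2}|z'-z|^{-d+2}|x-z|^{-d+2}|x-z'|^{-d+2}$. Summing over $z'$ first via \eqref{upper bound 1} reduces the inner factor to $C|x-z|^{-d+4}$, leaving $C\sum_z|z|^{-d+2}|x-z|^{-2d+6}$; the standard three-region split ($|z|\le|x|/2$, $|x-z|\le|x|/2$, and $|z|\ge 2|x|$) shows that this is $O(|x|^{-d+4})$ for every $d\ge 5$. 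The main obstacle is precisely this cross term: one must verify that the double concentration of $|x-X_n|^{-d+2}|x-X_{n'}|^{-d+2}$ near $x$ is absorbed by the spine Green function and \eqref{upper bound 1} without leaking any positive power of $|x|$, and the iterated use of \eqref{upper bound 1} is the essential mechanism. Combining the two contributions yields $E[\bar N_x^2]\le C|x|^{-d+4}$, and Paley--Zygmund completes the proof.
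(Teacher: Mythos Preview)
Your argument is correct and takes a genuinely different route from the paper. One small slip: the parenthetical claim that $E[N_y^2]\le C|y|^{-d+2}$ ``follows from Paley--Zygmund combined with \eqref{asymptotic 1}'' is backwards---Paley--Zygmund only yields a \emph{lower} bound on $E[N_y^2]$ from the hitting probability. Your alternative justification via an MRCA (many-to-two) decomposition is the right one: it gives $E[N_y^2]\le C|y|^{-d+2}+C\sum_{z}|z|^{-d+2}|y-z|^{-2d+4}\le C|y|^{-d+2}$ for $d\ge5$, and then both your diagonal and cross-term estimates go through exactly as written.

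The paper's proof avoids the second moment entirely. Instead of Paley--Zygmund, it shows directly that the conditional first moment $E[\hat N_x\mid \hat N_x>0]$ is bounded uniformly in $x$, where $\hat N_x$ counts visits to $x$ by the backward half of the spine decomposition. The mechanism is a stopping-time decomposition at the first spine index $\hat\tau_x$ whose attached BRW hits $x$: the visits from that BRW have bounded conditional mean by the unconditioned result $E[N_y\mid N_y>0]\le C$ (itself a consequence of \eqref{asymptotic 1} and Lemma~\ref{lemma unconditioned 1}), and the visits from all later spine indices are dominated by a fresh double BRW, whose expected visit count is bounded by Lemma~\ref{lemma conditioned 1}. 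Then $P(\hat N_x>0)=E[\hat N_x]/E[\hat N_x\mid\hat N_x>0]\ge c|x|^{-d+4}$.

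The trade-off: the paper's argument is shorter and uses only first moments, but it imports the lower bound of \eqref{asymptotic 1} for the unconditioned BRW from \cite{Branching2015} as a black box. Your second-moment computation is more self-contained---the only BRW input you need is the many-to-two formula---at the cost of the extra convolution estimate $\sum_z|z|^{-d+2}|x-z|^{-2d+6}\le C|x|^{-d+4}$, which you correctly handle.
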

\begin{proof}
The part of the upper bound has already been shown in Lemma \ref{lemma conditioned 1}, so we will concentrate on the lower bound. First, it is easy to see the desired result follows immediately if we can show the same lower bound for the backward branch of the double branching random walk. Then, back to the unconditioned critical geometric branching random walk, combining the lower bound in \eqref{asymptotic 1}, the upper bound in Lemma 1.1, and the fact that 
$$
E[N_x]=P(N_x>0)E[N_x|N_x>0],
$$
we immediately have, there is a $C<\infty$ such that 
$$
E[N_x|N_x>0]\le C. 
$$
Then for the backward part of the double branching random walk, let $\hat N_x$ be its number of visits to $x$, and 
$$
\hat\tau_x=\inf\{n: x\in {\rm Trace}(B(n))\}
$$
Note that $\{\hat\tau_x=n\}\subset \{\hat N_x>0\}$ and that 
\beq
\label{total expectation}
E[\hat N_x|\hat N_x>0]=\sum_{n=0}^\infty \sum_{y\in \ZZ^d} E[\hat N_x| \hat\tau_x=n, X_n=y] P( \hat\tau_x=n, X_n=y|\hat N_x>0),
\eeq
where 
$$
E[\hat N_x| \hat\tau_x=n, X_n=y]=E\left[N^{(n)}_{x-y}\big|\hat\tau_x=n, X_n=y\right]+E\left[\hat N^{(n+1)}_{x-y}\big|\hat\tau_x=n, X_n=y\right]
$$
where $N^{(n)}_{x-y}$ is the number of vertices in $\hat B(n)$ that is mapped to $x-y$, and $\hat N^{(n+1)}_{x-y}$ is the  number of vertices in $C^{n+1}=\{B(n+1),B(n+2),\cdots\}$ that is mapped to $x-y$. Note that the event 
$$
\{\hat\tau_x=n, X_n=y\}=\left\{N^{(n)}_{x-y}>0\right\}\cap A_n
$$
where 
$$
A_n=\bigcap_{i=0}^{n-1}\{{\rm Trace}(B(i))\cap x=\O\}\cap \{X_n=y\}\in \sigma \left (X_0,\cdots, X_n, \hat B(0),\cdots, \hat B(n-1)\right)
$$
which is independent to $\hat B(n)$. We have 
$$
E\left[N^{(n)}_{x-y}\big|\hat\tau_x=n, X_n=y\right]=E\left[N^{(n)}_{x-y}\big|N^{(n)}_{x-y}>0\right]\le C. 
$$
And by the fact that $C^{n+1}$ is stochastically dominated by a double critical geometric branching random walk starting from a uniformly chosen neighbor of $y$, which is conditionally independent to $\mathfs{F}_n$ given $X_n=y$, and the upper bounds we found in Lemma 1.2, we have 
$$
E\left[\bar N^{(n+1)}_{x-y}\big|\hat\tau_x=n, X_n=y\right]\le\frac{1}{2d}\sum_{k=1}^d\left( E\left[\bar N_{x-y+i_k}\right]+E\left[\bar N_{x-y-i_k}\right]\right)\le C.
$$
Thus there is a $C<\infty$ such that for any $n$, $x$ and $y$
$$
E\left[N^{(n)}_{x-y}\big|\hat\tau_x=n, X_n=y\right]\le C.
$$
Then plugging the upper bounds to the total expectation formula \eqref{total expectation}, we have $E[\hat N_x|\hat N_x>0]\le C$. Recalling the lower bound of $E[\bar N_x]$ in Lemma \ref{lemma conditioned 1}, the fact that $E[\hat N_x]>E[\bar N_x]/2$, and again the fact that 
$$
E[\hat N_x]=P(\hat N_x>0)E[\hat N_x|\hat N_x>0],
$$
We have the lower bound we need in this lemma. 
\end{proof}

%\end{document} 

%%%%%%%%%%%%%%%%%%%%%%
% !TEX root = Branching_interlacements.tex
%%%%%%%%%%%%%%%%%%%%%%%%%%%%%%%%%%%%%%%%%%%%%%%%%%%%%%%%%%%%%%%%%%%%%%%%%%%%%%%%%%%
\section{Proof of the Main Theorems}
\label{main theorem}
\subsection{Proof of Theorem \ref{thm:1}}
With the lemmas \ref{lemma conditioned 1}-\ref{lemma condition 3} given in Section \ref{upperbound}, for the double branching random walk starting at $x$ conditioned on $A_0=\{$the backward part of the double branching random walk never return to $x\}$. It is easy to see that 
\beq
\label{Thm 11}
P(\bar N_y>0|A_0)\le \frac{P(\bar N_y>0)}{\widehat{e}_x(x)}\le C |x-y|^{-d+4}
\eeq
and the same upper bounds holds for the probability of connection to 2 or 3 points. 

Moreover, in the branching interlacement $\CI^{u',u}$, $\forall u>u'\ge 0$, the trajectories that ever hit $x$ can be sampled by applying thinning on $N=Poisson(u)$ of double branching random walks starting at $x$. So the hitting probability is always less than or equal to that for the double branching random walks without thinning, which is bounded by $u$ times the hitting probability of each of them. Thus, the probability that there exists a trajectory in the branching random interlacement passing a certain point which also includes a set of cardinality 1, 2 or 3 has the same asymptotic results as in the lemmas \ref{lemma conditioned 1}-\ref{lemma condition 3}. 

With Lemma \ref{lemma lower bound} giving the asymptotic of the hitting probability of the double branching random walk to one point, we are also able to show the same asymptotic holds for the double branching random walk conditioned on the backward part never returning to the root. With the upper bound already shown above, consider $D=\{D(0), D(1),\cdots\}$ to be a double branching random walk starting at 0. It suffices to show that 
\beq
\label{Thm 12}
P(A_0\cap \{\bar N_x>0\})\ge c|x|^{-d+4}. 
\eeq
To show this, let stopping time 
$$
\tau=\inf\{n: X_n\in \partial B(0,|x|/2)\},
$$
where $X_n$ is the back bone simple random walk of $D$. For any $y\in \partial B(0,|x|/2)$, given $X_{\tau}=y$, $D'=\{D(\tau), D(\tau+1),\cdots\}$ is another double branching random walk starting at $y$ and is conditionally independent to $\mathfs{F}_{\tau-1}$. Moreover, let $A_0'\supset A_0$ be the event that the first $\tau-1$ backward branches never return to 0. We have $A_0'\in \mathfs{F}_{\tau-1}$. Thus 
$$
P(A_0\cap \{\bar N'_x>0\})=P(A_0'\cap \{\bar N'_x>0\})-P(A_0'\cap A_0^c\cap\{\bar N'_x>0\}),
$$
where $\bar N'$ is the number of visits in $D'$. For any $y\in \partial B(0,|x|/2)$, by Lemma \ref{lemma condition 2}
$$
P(A_0'\cap A_0^c\cap\{\bar N'_x>0\}|X_{\tau}=y)\le P(\bar N_{x-y}>0, \bar N_{-y}>0)\le C |x|^{-2d+8}=o(|x|^{-d+4}),
$$
and 
$$
P(A_0'\cap \{\bar N'_x>0\}\cap\{X_{\tau}=y\})=P(A_0'\cap\{X_{\tau}=y\}) P(\bar N_{x-y}>0)\ge cP(A_0'\cap\{X_{\tau}=y\}) |x|^{-d+4}.
$$
Combining the two inequalities above, we have the same asymptotic holds for the double branching random walk conditioned on the backward part never returning to the root. And for $\CI^{u',u}$, $\forall u>u'\ge 0$, the same asymptotic follows immediately from the fact that 
\beq
\label{Thm 13}
P[x\CM_{u',u}y]\ge P[Poisson(u-u')>0]P(A_0\cap \{\bar N_{y-x}>0\}). 
\eeq

Thus, by \eqref{Thm 11} and \eqref{Thm 12}, we have for any $x,y,z,w\in \BZ^d$
\beq
\label{dimension 1}
P(x\CL y)\ge C|x-y|^{-d+4}
\eeq
and 
\beq
\label{dimension 2}
P(x\CL y, z\CL w)\le C|x-y|^{-d+4}|z-w|^{-d+4}.
\eeq
if $x\not=z$. \eqref{dimension 2} is a result of that in $\CL$ the double branching random walks starting from each point in $\BZ^d$ are independent. And if $x=z$, we have by Lemma \ref{lemma condition 2} and the discussion above for the upper bound of the hitting probability of the double branching random walk conditioned on the backward part never returning to the root,
\beq
\label{dimension 3}
\begin{aligned}
P(x\CL y, x\CL w)&\le C\left(|x-y|^{-d+4}|x-w|^{-d+4}+|x-y|^{-d+4}|y-w|^{-d+4}+|x-w|^{-d+4}|y-w|^{-d+4}\right)\\
&= C\langle xyxw\rangle^{-d+4}.
\end{aligned}
\eeq
Combining \eqref{dimension 1}-\eqref{dimension 3}, we have by Definition \ref{dimension}, $\dims(\CL)=4$. And $\dims(\CR)=4$ follows from exactly the same argument. 

Then for the stochastic dimension of $\CM_{u',u}$, first by \eqref{Thm 13} we have 
\beq
\label{dimension 4}
P[x\CM_{u',u}y]\ge C |x-y|^{-d+4}.
\eeq
So it is sufficient for us to check the upper bound of the correlation $P(x\CM_{u',u} y, z\CM_{u',u} w)$, and the argument here is the same as the argument from (2.18) to (2.20) in \cite{Random11}. Let $K=\{x,y,z,w\}$. For $\omega_{u',u}=\sum_{i\ge 0} \delta_{w^*_i}$, we let $\hat \omega_{u',u}=\sum_{i\ge 0} \delta_{w^*_i}\ind_{\text{trace}(w^*_i)\supset K}$. So we can write 
\beq
\label{dimension 5}
P(x\CM_{u',u} y, z\CM_{u',u} w)=P(x\CM_{u',u} y, z\CM_{u',u} w,\hat \omega_{u',u}=0)+P(x\CM_{u',u} y, z\CM_{u',u} w,\hat \omega_{u',u}\not=0). 
\eeq
For a point measure $\tilde \omega\le \omega_{u',u}$, we write ``$x\CM_{u',u} y$ in $\tilde \omega$", if there is a trajectory in supp$(\tilde \omega)$ whose trace contains both $x$ and $y$. Note that for any $w^*\in \text{supp}(\omega_{u',u}-\hat \omega_{u',u})$ such that $x,y\in$Trace$(w^*)$, then at least one of $z$ or $w$ cannot belong to Trace$(w^*)$. Thus, the events $\{x\CM_{u',u} y {\rm \ in \ } \omega_{u',u}-\hat \omega_{u',u}\}$ and $\{z\CM_{u',u} w {\rm \ in \ } \omega_{u',u}-\hat \omega_{u',u}\}$ are defined in terms of disjoint sets of trajectories, and thus they are independent under the Poisson point measure $P$. So for the first term in \eqref{dimension 5}, according to such independence we have above, and the discussion on the upper bound of the probability that there exists a trajectory in the branching random interlacement passing a certain point which also includes a set of cardinality 1, we get that 
\beq
\label{dimension 6}
\begin{aligned}
P(x\CM_{u',u} y, z\CM_{u',u} w,\hat \omega_{u',u}=0)&=P(x\CM_{u',u} y {\rm \ in \ } \omega_{u',u}-\hat \omega_{u',u}, z\CM_{u',u} w {\rm \ in \ } \omega_{u',u}-\hat \omega_{u',u},\hat \omega_{u',u}=0)\\
&\le P(x\CM_{u',u} y {\rm \ in \ } \omega_{u',u}-\hat \omega_{u',u}, z\CM_{u',u} w {\rm \ in \ } \omega_{u',u}-\hat \omega_{u',u})\\
&\le P(x\CM_{u',u} y {\rm \ in \ } \omega_{u',u}-\hat \omega_{u',u})P( z\CM_{u',u} w {\rm \ in \ } \omega_{u',u}-\hat \omega_{u',u})\\
&\le P(x\CM_{u',u} y)P(z\CM_{u',u} w)\\
&\le C|x-y|^{-d+4}|z-w|^{-d+4}. 
\end{aligned}
\eeq
Then for the second term in \eqref{dimension 5}, note that 
$$
P(x\CM_{u',u} y, z\CM_{u',u} w,\hat \omega_{u',u}\not=0)= P(\hat \omega_{u',u}\not=0)
$$
and that the event $\{\hat \omega_{u',u}\not=0\}$ is the same as the event 
$$
\{\text{there exists a trajectory in } \omega_{u',u} \text{ passing } x \text{ whose trace also includes } y,z,w\},
$$
and that the discussion on the upper bound of the probability that there exists a trajectory in the branching random interlacement passing a certain point which also includes a set of cardinality 3. We have 
\beq
\label{dimension 7}
P(x\CM_{u',u} y, z\CM_{u',u} w,\hat \omega_{u',u}\not=0)\le C\langle xyzw \rangle^{-d+4}.
\eeq
Combining \eqref{dimension 4}-\eqref{dimension 7}, we have shown that $\dims(\CM_{u',u})=4$ and the proof of Theorem \ref{thm:1} is complete. 

\subsection{Proof of upper bound in Theorem \ref{thm:main}}
We follow Section 4 of \cite{Random11}. Let $A_1$ be the event that $x\in\CI^{u/\lceil d/4\rceil}$ and $A_2$ is the event that $y\in\CI^{(\lceil d/4\rceil-1)u/\lceil d/4\rceil,u}$. Conditioned on $A_1$ one gets that $\omega_{u/\lceil d/4\rceil}(W^*_{x})\ge 1$. Thus there is at least one double branching random walk emanating from $x$, conditioned on the backward part never returning to $x$. We can conclude that conditioned on $A_1$, $\{z:x\CM_{0,u/\lceil d/4\rceil}z\}$ stochastically dominates $\{z:x\CL z\}$. By the same reasoning, conditioned on $A_2$ we have that $\{z:z\CM_{(\lceil d/4\rceil-1)u/\lceil d/4\rceil,u}y\}$ stochastically dominates $\{z:z\CR y\}$. Denote $A=A_1\cap A_2$,
$$
\prob\left[x\CM_u^{\lceil d/4\rceil}y\bigg|A\right]\ge \prob\left[x\prod_{i=1}^{\lceil d/4\rceil}\CM_{u(i-1)\lceil d/4\rceil,ui\lceil d/4\rceil}y\bigg|A\right]\ge\prob[x\CC y]=1
.$$
Now for every disjoint intervals $I_1=[t_1,t_2]$ and $I_2=[t_3,t_4]$, define $A_{I_1}=\{x\in\CI^{t_1,t_2}\}$ and $A_{I_2}=\{x\in\CI^{t_3,t_4}\}$. By similar arguments one can get
\beq\label{eq:condoneprob}
\prob\left[x\CM_u^{\lceil d/4\rceil}y\bigg|A_{I_1}\cap A_{I_2}\right]=1
.\eeq
Since $$\{x,y\in\CI^u\}=\{x\CM^u y\}\cup\bigcup_{I_1,I_2 \subset [0,u],\text{ disjoint}}\{x\in\CI^{t_1,t_2},y\in\CI^{t_3,t_4}\},$$
where all $t_1,t_2,t_3,t_4\in\BQ$ are distinct. By \eqref{eq:condoneprob}, conditioned on any event in the countable positive probability union above we have $x\CM_u^{\lceil d/4\rceil}y$ a.s. Thus we conclude that
$$
\prob\left[x\CM_u^{\lceil d/4\rceil}y|x,y\in\CI^u\right]=1
.$$
%%%%%%%%%%%%%%%%%%%%%%%%%%%%%%%%
\subsection{Proof of lower bound in Theorem \ref{thm:main}}
This part follows immediately from Section 5 of \cite{Random11}. The only change is in the definition of $m$ which is $m=\lceil d/4\rceil-1$ for the purpose of this result, and the stochastic dimension of the relation $\CM$ is 4 instead of 2. 
\bibliography{eigen}
\bibliographystyle{plain}

\end{document}